\theoremstyle{plain}
\newtheorem{Theorem}{Theorem}[section]
\theoremstyle{definition}
\newtheorem{Definition}{Definition}[Theorem]
\theoremstyle{remark}
\newtheorem{Remark}[Theorem]{Remark}
\theoremstyle{remark}
\theoremstyle{plain}
\theoremstyle{plain}
\theoremstyle{plain}
\newtheorem{Proposition}[Theorem]{Proposition}
\theoremstyle{remark}
\newtheorem{Example}{Example}[section]
\theoremstyle{remark}
\theoremstyle{remark}
\theoremstyle{remark}
\newtheorem{Assumptions}[Theorem]{Assumptions}
\newcommand\RR{\mathbb{R}}
\newcommand\NN{\mathbb{N}}
\newcommand{\XX}{\mathcal{X}^2}
\newcommand{\Ind}[1]{\mathbbm{1}_{\left [#1\right ]}}
\newcommand{\GG}{\mathbb{G}}
\newcommand{\uu}{\mathbf{u}}
\newcommand{\vv}{\mathbf{v}}
\newcommand{\XXX}{\mathbf{X}}
\newcommand{\YYY}{\mathbf{Y}}
\newcommand{\AAA}{\mathcal{A}}
\newcommand{\Ver}{\mathrm{v}}
\newcommand{\TT}{\mathcal{T}(t)}
\newcommand{\EP}{\mathcal{E}^2}
\begin{document}

\date{}

\title{Stochastic reaction-diffusion equations on networks with dynamic time-delayed boundary conditions}

\author{Francesco Cordoni$^1$ and Luca Di Persio$^2$\\
$^1$ University of Trento - Department of Mathematics \\ Via Sommarive, 14 Trento, 38123-ITALY\\
e-mail: francesco.cordoni@unitn.it
\\[2pt]
$^2$ University of Verona - Department of Computer Science\\ Strada le Grazie, 15 Verona, 37134-ITALY\\
e-mail: luca.dipersio@univr.it
}

\maketitle

\begin{abstract}

We consider a reaction-diffusion equation on a network subjected to dynamic boundary conditions, with time  delayed behaviour, also allowing for  multiplicative Gaussian noise perturbations. Exploiting semigroup theory, we  rewrite the aforementioned stochastic problem as an abstract stochastic partial differential equation taking values in a suitable product Hilbert space, for which we prove the existence and uniqueness of a mild solution. Eventually, a stochastic optimal control application is studied.\\
{\bf AMS Subject Classification:} 60H15, 65C30, 93E20, 35K57 \\
{\bf Key Words and Phrases:} Stochastic reaction-diffusion equations; dynamic boundary conditions; time-delayed boundary conditions; multiplicative Gaussian perturbations; Semigroup theory; Stochastic Partial Differential Equations in infinite dimension; Stochastic Optimal Control.

\end{abstract}

\section{Introduction}\label{SEC:Intro}

Recent years have seen an increasing attention to the study of diffusion problems on networks, especially in connection with the theory of stochastic processes. In fact, there is a broad area of possible applications where the mathematical use  of graphs and random dynamics stated on them, play a crucial role, as in the case, e.g., of quantum mechanics, see, e.g. \cite{Tum}, the books \cite{GarZoll, Kleinert} and references therein;  in  neurobiology, as an example concerning the study of stochastic system of the \textit{FitzHugh-Nagumo} type, see, e.g., \cite{ALbDP, Allen, BarbuCorDP,BZM,Car}; or in finance, see, e.g., \cite{BenazzoliDP,CDPBackward1,CDPBackward2,HiSa} and references therein, particularly in the light of numerical applications, see, e.g., \cite{DPBonolloPellegrini}

Concerning the aforementioned ambit, a possible approach which has shown to   be particularly useful, is to introduce a suitable infinite dimensional space of functions that takes into account the underlying graph domain and then tackle the diffusion problem exploiting both functional analytic tools and infinite dimensional analysis. This technique had led to a systematic study of Stochastic Partial Differential Equations (SPDEs) on networks, showing that it is in general possible to rewrite a diffusion problem defined on  a network in a general abstract form, see, e.g., \cite{BZM,Car,Car3, DPZ}, and the monograph \cite{MugB} for a detailed introduction to the subject.

One of the main issues that appears in rewriting the initial problem into an operatorial abstract setting, is to choose the right boundary conditions (BC), that the diffusion problem has to satisfy. In order to overcome the latter, a systematic study of abstract SPDE equipped with different possible BC has been carried up during last years. The typical conditions when one has to deal with diffusion problems governed by a second order differential operator are the so-call \textit{generalized Kirchhoff conditions}, see, e.g., \cite{Mug}. Nevertheless rather recently, different types of general BC has been proposed, such as non-local BC, allowing for non-local interaction of non-adjacent vertex of the graph, see, e.g., \cite{Car,DPZ}, or dynamic BC, see, e.g., \cite{BZM,MugR}, or also mixed type BC, allowing for both static and dynamic non-local boundary conditions, see, e.g., \cite{CDP}.

In the present work we consider a new type of non-local BC. In fact, in any of the aforementioned works, only non-local spatial BC have been considered, while we will focus our attention on boundary conditions which are non-local in time. We refer to \cite{Inf,Inf1,Inf2, Web}, and references therein, for concrete applications that can be potentially studied in the light of the approach that we develop in our work.

In particular, our study exploits the theory of delay equations, see, e.g., \cite{Bat1,Bat2}, so that we will lift the time-delayed boundary conditions to have values in a suitable infinite dimensional path space, showing that the corresponding differential operator does in fact generate a strongly continuous semigroup on an appropriate space of paths.

The work is structured as follows: in Sec. \ref{SEC:GF} we will introduce the setting and the main notations; in Sec. \ref{SEC:InfGen}, exploiting the theory of delay operators, we will introduce the infinite dimensional product space we will work in, also showing that we can rewrite our equation as an infinite dimensional problem where the differential operator generates a strongly continuous semigroup, this immediately lead to the wellposedeness of the abstract Cauchy problem; in Sec. \ref{SEC:AbC} we will introduce a stochastic multiplicative perturbation of Brownian type, showing the existence and uniqueness of a \textit{mild solution}, in a suitable sense, under rather mild assumptions on the coefficients; finally, in Sec. \ref{SEC:Opt}, we provide an application of the developed theory to a stochastic optimal control problem.

\section{General framework}\label{SEC:GF}

Let us consider a finite, connected network identified with a finite graph  composed by $n \in \NN$ vertices $\Ver_1,\dots,\Ver_n$, and by $m \in \NN$ edges $e_1, \dots, e_m$ which are assumed to be normalized on the interval $[0,1]$. 
Moreover, we will  assume that on the nodes $\mathrm{v}_1, \dots, \mathrm{v}_n$ of $\GG$ are endowed with dynamic boundary conditions to be specified later on.

We would like to recall that in \cite{Car3,CDP,DPZ}, a diffusion problem has been considered, stated on a finite graph, where the boundary conditions exhibit  non-local behaviour, namely what happens on a given node also depends on the state of the remaining nodes, even without a direct connection. In the present work, we will consider a different type of non-local condition, studying a diffusion on a finite graph where the boundary conditions, at a given time, are affected  by the present value of the state equation on each nodes, as well as by the  past values of the underlying dynamic.

In particular we exploiting the semigroup theory, see, e.g. \cite{Eng} for a detailed  introduction to semigroup theory  and \cite{MugB} to what concerns its application on networks, to show how to rephrase our main problem  as an abstract Cauchy problem, so that the well posedness of the solution will be linked to the fact that a certain matrix operator generates a $C_0-$semigroup on a suitable, infinite dimensional, space.

In what follows we will employ the following notation: we will use the Latin letter $i,j,k=1,\dots,m$, $m \in \mathbb{N}^+$,  to denote the edges, hence $u_i$ it will be a function on the edge $e_i$, $i=1,\dots, m$; while we will use Greek letters $\alpha,\beta, \gamma = 1,\dots,n$, $n \in \mathbb{N}^+$,   to denote the vertexes,  consequently $d_\alpha$ it will be a function evaluated at the node $v_\alpha$, $\alpha = 1,\dots,n$. 

To describe the  graph structure we use the so-called \textit{incidence matrix} $\Phi = \left (\phi_{\alpha,i}\right )_{(n+1)\times m}$, defined as $\Phi:= \Phi^+-\Phi^-$, where $\Phi^+ = \left (\phi_{\alpha,i}^+\right )_{(n+1)\times m}$,  resp. $\Phi^- = \left (\phi_{\alpha,i}^-\right )_{(n+1)\times m}$, is the \textit{incoming incidence matrix}, resp. the \textit{outgoing incidence matrix}. Let us note that $\phi^+_{\alpha,i}$, resp. $\phi^-_{\alpha,i}$, takes value $1$ whenever the vertex $v_\alpha$ is the initial point, resp. the terminal point, of the edge $e_i$, and $0$ otherwise, that is it holds
\[
\phi^+_{\alpha,i} = 
\begin{cases}
1 & \Ver_\alpha = e_i(0)\, ,\\
0 & \mbox{ otherwise } 
\end{cases} \, ,\quad 
\phi^-_{\alpha,i} = 
\begin{cases}
1 & \Ver_\alpha = e_i(1)\, ,\\
0 & \mbox{ otherwise }  \, 
\end{cases}
\;,
\]

moreover, if $|\phi_{\alpha,i}|=1$,   the edge $e_i$ is called \textit{incident} to the vertex $\Ver_\alpha$ and  accordingly, we define 
\[
\Gamma(\Ver_\alpha) = \{i \in \{1,\dots,m\} \, : \, |\phi_{\alpha i}| =1\}\, ,
\]
as the set of incident edges to the vertex $\Ver_\alpha$.

Taking into consideration the above introduced notations, we state the following diffusion problem on the finite and connected graph $\GG$

{\footnotesize
\begin{equation}\label{EQN:Sys}
\begin{cases}
\dot{u}_j (t,x) =  \left (c_{j} u'_j\right )' (t,x)  \, , \quad t \geq 0\, ,\, x \in (0,1)\, , \, j = 1,\dots,m\, ,\\[1.5ex]
u_j(t,\mathrm{v}_\alpha) = u_l(t,\mathrm{v}_\alpha) =: d^\alpha(t)\, , \quad t \geq 0\, ,\, l,\, j \in \Gamma(\mathrm{v}_\alpha)\, , \, j = 1,\dots,m\, ,\\[1.5ex]
\dot{d}^\alpha (t) = - \sum_{j=1}^m \phi_{j \alpha} u'_j(t,\mathrm{v}_\alpha) + b_{\alpha} d^\alpha(t) + \int_{-r}^0 d^\alpha (t+\theta) \mu(d \theta)\, ,\quad t \geq 0 \, ,\, \alpha=1,\dots,n\, ,\\[1.5ex]
u_j(0,x) = u^0_j(x) \, ,\quad x \in (0,1)\, ,\, j=1,\dots,m\, ,\\[1.5ex]
d^\alpha(0) = d_\alpha^0\, ,\quad \alpha=1,\dots,n\, ,\\[1.5ex]
d^\alpha(\theta) = \eta_\alpha^0(\theta) \, ,\quad \theta \in [-r,0] \, ,\, \alpha=1,\dots,n\, .\\
\end{cases}
\end{equation}
}
where $\mu \in \mathcal{M}([-r,0])$ and $\mathcal{M}([-r,0])$ is the set of Borel measure on $[-r,0]$, being $r>0$ a finite constant.
Before state the main assumptions concerning the terms appearing in \eqref{EQN:Sys}, let us make the following
\begin{Remark}
We would like to underline that the approach we are going to develop can be generalized, exploiting the  same techniques, to the case where only $0 < n_0 < n$ nodes have dynamics conditions, whereas the remaining $n-n_0$ nodes exhibit standard  \textit{Kirchhoff type} conditions. Since our  interest mainly concerns the study of dynamic boundary conditions, and to consider a mixed boundary type conditions does not affect neither the approach nor the final result, for the sake of simplicity we will assume that all the $n$ nodes composing the graph are endowed with  dynamic boundary conditions.\end{Remark}

With respect to the definition of the terms we have introduced in \eqref{EQN:Sys}, in order to consider the  diffusion problem on $\GG$ , we assume the following to hold

\begin{Assumptions}\label{ASS:1}
\begin{description}
\item[(i)] for any $j=1,\dots,m,$ the function $c_j \in C^1([0,1])$, while $c(x)>0$ for a.a. $x \in [0,1]$;\\
\item[(ii)] for any $\alpha=1,\dots,n,$ we have that $b_\alpha \leq 0$, moreover there exists at least one $\alpha \in \{ 1,\dots,n\},$ such that  $b_\alpha<0$.
\end{description}
\end{Assumptions}

The typical approach concerning the study of delay differential equations consists in lifting the underlying process, which originally takes values in a finite dimensional space, to a suitable infinite dimensional path space, usually the space of square integrable Lebesgue functions or the space of continuous functions. 

In particular, we consider the following Hilbert spaces 
\[
\begin{split}
&X^2 := \left (L^2([0,1])\right )^m\, ,\quad Z^2 := L^2([-r,0];\RR^n)\, ,\\
& \XX := X^2 \times \RR^n \, , \quad   \EP := \XX \times Z^2\, ,
\end{split}
\]
equipped with the standard graph norms and scalar products. Since we are interested in applying the aforementioned {\it lifting} procedure to rewrite the dynamic of  the $\RR^n-$valued process $d$ as it takes values  in an infinite dimensional space, we introduce the notion of \textit{segment}. In particular, we consider the process $d :[-r,T] \to \RR^n$, and, for any $t \geq 0$,  we define the \textit{segment} as
\begin{equation}\label{segment}
d_t :[-r,0] \to \RR^n\, ,\quad  [-r,0] \ni \theta \mapsto d_t(\theta) := d(t + \theta) \in \RR^n \, .
\end{equation}
As it is standard in dealing with 
delay equation,  we denote by $d(t)$ the present $\RR^n-$value of the process $d$, whereas  $d_t$ stands for the \textit{segment} of the process $d$, i.e. $d_t = \left (d(t+\theta)\right )_{\theta \in [-r,0]}$. More precisely, we have
\[
\begin{split}
u(t) &:= \left (u_1(t),\dots, u_m(t)\right )^T \in X^2\, ,\\
d(t) &:= \left (d^1(t),\dots, d^n(t)\right )^T \in \RR^n\, , \\
d_t &:= \left (d^1_t,\dots, d^n_t\right )^T \in Z^2 \, .
\end{split}
\]
Exploiting latter notations, we can  rewrite the system \eqref{EQN:Sys}, as follows
\begin{equation}\label{EQN:InfDelayGenerale}
\begin{cases}
\dot u(t) = A_m u(t)\, , \quad t \in [0,T]\, ,\\
\dot d(t) = C u(t) + \Phi d_t + B d(t)\, , \quad t \in [0,T]\, ,\\
\dot d_t = A_{\theta} d_t\, , \quad t \in [0,T]\, ,\\
L u(t) = d(t)\, ,\\
u(0) = u_0 \in X^2 \, ,\quad d_0 = \eta \in Z^2 \, ,\quad  d(0) = d^0 \in \RR^n \, ,
\end{cases}
\end{equation}

%
where $A_m$ is the differential operator defined by
\[
A_m u (t,x) = \left( \begin{array}{ccc}
\frac{\partial}{\partial x}\left (c_j(x) \frac{\partial}{\partial x} u_1(t,x)\right ) & 0 & 0 \\
0 & \ddots & 0  \\
0 & 0 & \frac{\partial}{\partial x}\left (c_m(x) \frac{\partial}{\partial x} u_m(t,x) \right )\\
\end{array} \right)\, ,
\]
and such that  $A_m : D(A_m) \subset X^2 \to X^2$, with domain
\[
D(A) := \left \{ u \in \left (H^2([0,1])\right )^m \, : \, \exists d \in \RR^n \, : \, L u = d \right \}\, ,
\]
where $L: \left (H^1([0,1])\right )^m \to \RR^n$ is the following \textit{boundary evaluation operator}
\[
L u(t,x) := \left (d^1(t), \dots, d^n(t)\right )^T\, ,\quad d^\alpha(t) := u_j(t,\mathrm{v}_\alpha) \,, \quad  j \in \Gamma(\mathrm{v}_\alpha) \, .
\]
We underline that the  operator $(A,D(A))$ just defined, generates a $C_0-$semigroup on the space $X^2$, see, e.g., \cite{BZM,DPZ,Mug}.
Moreover, in writing system \eqref{EQN:InfDelayGenerale}, we also made use of the so-called \textit{feedback operator} $C : D(A) \to \RR^n$, which is defined as follows
\[
C u(t,x) := \left (-\sum_{j=1}^m \phi_{j 1} u'_j(t,\mathrm{v}_1) , \dots, -\sum_{j=1}^m \phi_{j n} u'_j(t,\mathrm{v}_n) \right )^T \, ,
\]
furthermore, we have set $B$ to be the following $n \times n$ diagonal matrix
\[
B = \left( \begin{array}{ccc}
b_1 & 0 & 0 \\
0 & \ddots & 0  \\
0 & 0 & b_n\\
\end{array} \right)\, ,
\]
where  $b_\alpha$, $\alpha=1,\dots,n$, satisfy assumptions \ref{ASS:1}; also the operator 
\begin{equation}\label{delayoperator}
\Phi:C([-r,0];\RR^n) \to \RR^n\, ,
\end{equation}
defined by
\begin{equation}\label{EQN:BDel}
\Phi(\eta) = \int_{-r}^0 \eta(\theta) \mu(d \theta)\, ,
\end{equation}
where $\mu$ is a measure of bounded variation. Notice that a particular case of the present situation is the discrete delay case, that is $\mu = \delta_{x_0}$, being $ \delta_{x_0}$ the Dirac measure centred at $x_0 \in [-r,0)$. Eventually, we have denoted by $A_\theta: D(A_\theta) \subset Z^2 \to Z^2$, the linear differential operator defined by
\[
A_\theta \eta := \frac{\partial}{\partial \theta} \eta(\theta)\, ,\quad D(A_\theta) = \{ \eta \in H^1([-r,0];\RR^n) \, : \, \eta(0) = d^0\}\, ,
\]
 where the derivative $\frac{\partial}{\partial \theta}$ has to be intended as the weak distributional derivative in $Z^2$.

\begin{Remark}
A particular case of the setting introduced above is given by choosing  the so-called \textit{continuous delay operator} $\Phi d_t = \int_{-r}^0 d^u(t+\theta)\mu(d \theta)$, which ensures that  \eqref{EQN:Sys} satisfies the aforementioned assumptions. Another possible  choice is represented by the \textit{discrete delay operator} $\Phi d_t = d^u(t-r)$, which is obtained by the previous one taking $\mu = \delta_{-r}$, where  $\delta_{-r}$ is the Dirac delta centered at $-r$. In what follows we do not specify the particular form of the \textit{delay operator}, in order to prove our results   in the general case of a bounded linear operator $\Phi$.
\end{Remark}

Summing up the previously introduced notation, we can rewrite  equation \eqref{EQN:InfDelayGenerale} more compactly, namely 
\begin{equation}\label{EQN:InfDelayG}
\begin{cases}
\dot \uu(t) = \AAA \uu(t)\, , \quad t \in [0,T]\, ,\\
\uu(0) = \uu_0 \in \EP  \, ,
\end{cases}
\end{equation}
where  $\uu(t) := (u(t),d(t),d_t)^T$, $\uu_0 := (u_0,d^0,\eta)\in \EP$, and the operator $\AAA$ is defined as
\begin{equation}\label{EQN:OpA}
\AAA := \left( \begin{array}{ccc}
A_m & 0 & 0 \\
C & B & \Phi \\
0 & 0 & A_\theta\\
\end{array} \right)\; ,
\end{equation}
with domain $D(\AAA) := D(A_m) \times D(A_\theta)$.
We will  show later that the matrix operator $(\AAA,D(\AAA))$  in equation \eqref{EQN:OpA}, generates a $C_0-$semigroup on the Hilbert space $\EP$, which implies  the wellposedness as well as the uniqueness of the solution, in a suitable sense, for the equation \eqref{EQN:InfDelayG}.

\section{On the infinitesimal generator}\label{SEC:InfGen}

The  present section will be mainly dedicated to the study of  the operator defined in equation \eqref{EQN:OpA}, aiming at proving that it  generates a $C_0-$semigroup.
For the sake of completeness, we recall that the operator $\mathcal{A}$ generates a strongly continuous semigroup in the case that no delay on the boundary is taken into account. In fact, according to the notation introduced within section \ref{SEC:GF}, if we consider the operator 
\begin{equation}\label{EQN:OpAa}
A_{\mathfrak{a}} := \left( \begin{array}{cc}
A_m & 0 \\
C & B \\
\end{array} \right)\, ,
\end{equation}
with domain
\begin{equation}\label{EQN:OpDAa}
D(A_{\mathfrak{a}}) := \left \{ \uu = (u,d) \in \XX \, : \, u \in D(A_m) \, , \, u_j(\mathrm{v}_\alpha) = d^\alpha \,\quad  j \in \Gamma(\mathrm{v}_\alpha) \right \}\, ,
\end{equation}
then we have the following result.

\begin{Proposition}\label{PROP:E!Form}
Let  assumptions \ref{ASS:1} hold true, then the operator $\left (A_{\mathfrak{a}},D(A_{\mathfrak{a}})\right )$ 
is self-adjoint, dissipative and has compact resolvent. In particular $A_{\mathfrak{a}}$ generates an analytic $C_0-$semigroup of contractions on the Hilbert space $\XX$. Moreover,  the semigroup $\left (T_{\mathfrak{a}}(t)\right )_{t \geq 0}$, generated by $A_{\mathfrak{a}}$, is uniformly exponentially stable.
\end{Proposition}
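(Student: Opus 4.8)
The plan is to realise $A_{\fo}$ as the operator associated with a symmetric, continuous, closed and accretive sesquilinear form on $\XX$, so that self-adjointness, dissipativity and the generation of an analytic contraction semigroup all follow simultaneously from the first representation theorem for forms. Concretely, I would take as form domain
\[
V := \left\{ (u,d) \in \left(H^1([0,1])\right)^m \times \RR^n \, : \, u_j(\Ver_\alpha) = d^\alpha \,,\ j \in \Gamma(\Ver_\alpha) \right\}\,,
\]
and set, for $\uu=(u,d)$ and $\vv=(w,e) \in V$,
\[
\form(\uu,\vv) := \sum_{j=1}^m \int_0^1 c_j(x)\, u_j'(x)\, \overline{w_j'(x)}\, \de x \;-\; \sum_{\alpha=1}^n b_\alpha\, d^\alpha\, \overline{e^\alpha}\,.
\]
Since the $c_j$ and $b_\alpha$ are real, $\form$ is symmetric, and because $c_j>0$ and $b_\alpha \leq 0$ by Assumptions \ref{ASS:1}, one has $\form(\uu,\uu) \geq 0$, i.e. $\form$ is accretive.

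The first step is to check that $\form$ indeed generates $A_{\fo}$. Integrating by parts edge by edge gives $\langle A_m u, w\rangle_{X^2} = -\sum_j \int_0^1 c_j u_j' \overline{w_j'}\,\de x + \sum_j \left[c_j u_j' \overline{w_j}\right]_0^1$; regrouping the boundary terms vertex by vertex and using the continuity constraint $w_j(\Ver_\alpha)=e^\alpha$ together with the (conormal) definition of the feedback operator $C$, they reproduce exactly $\langle Cu, e\rangle_{\RR^n}$. Hence $\langle A_{\fo}\uu, \vv\rangle_{\XX} = \langle A_m u, w\rangle_{X^2} + \langle Cu+Bd, e\rangle_{\RR^n} = -\form(\uu,\vv)$, which identifies $A_{\fo}$ as minus the operator of the form. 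I would then verify that $V$ is dense in $\XX$, that $\form$ is $V$-bounded (immediate from $c_j \in C^1([0,1])$ and finiteness of the $b_\alpha$), and that it is closed: since $c_j \geq c_0>0$, the quantity $\form(\uu,\uu)+\|\uu\|_{\XX}^2$ is equivalent to the $\left(H^1([0,1])\right)^m \times \RR^n$ norm, so $V$ endowed with the form norm is complete. Kato's first representation theorem then yields that $A_{\fo}$ is self-adjoint and dissipative and generates an analytic $C_0$-semigroup of contractions on $\XX$.

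Compactness of the resolvent follows from the compactness of the embedding $V \hookrightarrow \XX$: on each edge $H^1([0,1]) \hookrightarrow L^2([0,1])$ compactly by Rellich--Kondrachov, while the $\RR^n$-component is finite dimensional, so the inclusion of $D(A_{\fo}) \subset V$ into $\XX$ is compact, which is equivalent to compactness of $(\lambda - A_{\fo})^{-1}$ for $\lambda$ in the resolvent set.

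The delicate point, and where Assumption \ref{ASS:1}(ii) enters decisively, is uniform exponential stability. Because $A_{\fo}$ is self-adjoint with compact resolvent, its spectrum is a discrete set of real eigenvalues and the growth bound equals the spectral bound $s(A_{\fo})=-\lambda_1$, where $\lambda_1 = \inf_{\uu \neq 0}\form(\uu,\uu)/\|\uu\|_{\XX}^2$; by compactness of $V\hookrightarrow \XX$ this infimum is attained, so it suffices to prove strict positivity $\form(\uu,\uu)>0$ for $\uu \neq 0$. If $\form(\uu,\uu)=0$, then $\int_0^1 c_j|u_j'|^2\,\de x=0$ forces each $u_j$ to be constant, while $\sum_\alpha b_\alpha|d^\alpha|^2=0$ together with the existence of some $b_{\alpha_0}<0$ forces $d^{\alpha_0}=0$. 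Here I would invoke connectedness of $\GG$: the continuity conditions $u_j(\Ver_\alpha)=d^\alpha$ propagate a single common value along the graph, so all the $u_j$ and all the $d^\alpha$ coincide with one constant, which must then vanish because $d^{\alpha_0}=0$. Thus $\uu=0$, giving $\lambda_1>0$ and hence $s(A_{\fo})<0$, i.e. uniform exponential stability. I expect this last step to be the main obstacle: one must couple the degeneracy structure of the form (which only sees $u_j'$ and the weighted moduli $b_\alpha|d^\alpha|^2$) with the topology of the network, so that a \emph{single} strictly dissipative node together with connectivity rules out a zero eigenvalue.
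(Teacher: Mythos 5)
Your proof is correct and follows essentially the same route as the paper: both realise $A_{\fo}$ as (minus) the operator associated with the symmetric, closed, continuous, positive sesquilinear form $\sum_{j}\int_0^1 c_j|u_j'|^2\,dx-\sum_{\alpha}b_\alpha|d^\alpha|^2$ on the same domain of vertex-continuous $H^1$ functions, and deduce self-adjointness, dissipativity, compact resolvent and analytic contraction-semigroup generation from the representation theory of forms (the paper writes the zero-order term with a $+$ sign, which is apparently a typo since it then asserts positivity of the form). The only substantive difference is that the paper delegates all verifications, including uniform exponential stability, to citations of Bonaccorsi--Marinelli--Ziglio and Mugnolo--Romanelli, whereas you supply the stability argument yourself, and it is the right one (spectral bound via the Rayleigh quotient, with connectedness of $\GG$ plus a single $b_{\alpha_0}<0$ forcing the form's kernel to be trivial); the one point to flag is that your closedness step uses $c_j\ge c_0>0$, which is marginally stronger than the stated assumption ``$c(x)>0$ for a.a.\ $x$''.
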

\begin{proof}
A proof of the claim can be found in \cite[Prop. 2.4]{BZM}, as well as in \cite[Cor 3.4]{MugR}, nevertheless we give a sketch of it to better clarify the type of methods involved.
We consider the sesquilinear form $\mathfrak{a}: V_{\mathfrak{a}}\times V_{\mathfrak{a}} \to \RR$, defined, for any $\uu= (u,d)$, $\vv=(v,h) \in \XX$, by
\begin{equation}\label{EQN:Forma}
\mathfrak{a}(\uu,\vv) = \sum_{j=1}^m \int_0^1 c_j(x) u_j'(x) v_j'(x) dx + \sum_{\alpha=1}^n b_\alpha d^\alpha h^\alpha\,  .
\end{equation}
and with dense domain $V_{\mathfrak{a}} \subset \XX$ defined as follows
\[
\begin{split}
V_{\mathfrak{a}} &:= \left \{\uu = (u,d) \in \XX \, : \, u \in \left (H^1(0,1)\right )^m \, , \right .\\
& \qquad \qquad \left . u_j(\mathrm{v}_\alpha) = d^\alpha \, ,\quad \alpha = 1,\dots,n \, , \, j \in \Gamma(\mathrm{v}_\alpha)\right \}\: .
\end{split}
\]
Exploiting \cite[Lemma 3.2]{MugR}, it can be shown that the form $\mathfrak{a}$  is symmetric, closed, continuous and positive, then, by \cite[Lemma 3.3]{MugR}, it is associated to the operator $\left (A_{\mathfrak{a}},D(A_{\mathfrak{a}})\right )$, 
and the result follows by using classical results on sesquilinear forms, see, e.g., \cite{Oua}.
\end{proof}

Using the  operator defined in \eqref{EQN:OpAa}--\eqref{EQN:OpDAa}, and  exploiting a well known perturbation result, it is possible to show that the operator $\left (\AAA,D(\AAA)\right )$ generates a $C_0-$semigroup. We will first prove that the diagonal operator defined as
\begin{equation}\label{EQN:OpDiagA}
\AAA_0 := \left( \begin{array}{cc}
A_\mathfrak{a} & 0 \\
0 & A_\theta\\
\end{array} \right)\, , \quad D(\AAA_0)=D(\AAA)\, ,
\end{equation}
generates a $C_0-$semigroup on the Hilbert space $\EP$.

\begin{Theorem}\label{THM:E!Gen}
Let  assumptions \ref{ASS:1} hold true, then the matrix operator $\left (\AAA_0,D(\AAA_0)\right )$, defined in equation \eqref{EQN:OpDiagA}, generates a $C_0-$semigroup given by
\begin{equation}\label{EQN:SGrT0}
\mathcal{T}_0(t) = \left(
\begin{array}{cc|c}
\qquad {\huge\mbox{{$T_\mathfrak{a}(t)$}}} &&0 \\
 && 0\\ \hline
 0 & T_t & T_0 (t)\\[0.5ex]
\end{array}
\right)\, ,
\end{equation}
where  $T_\mathfrak{a}$ is the $C_0-$semigroup generated by $\left (A_{\mathfrak{a}},D(A_{\mathfrak{a}})\right )$, see equations \eqref{EQN:OpAa}-\eqref{EQN:OpDAa}, $T_0(t)$ is the nilpotent left-shift semigroup
\begin{equation}\label{EQN:Nilpotent}
\left (T_0(t) \eta \right  )(\theta) := \begin{cases}
\eta(t+\theta) & t+\theta \leq 0\, ,\\
0 & t+\theta >0\, , 
\end{cases}\, ,\quad \eta \in Z^2\, ,
\end{equation}
and $T_t: \RR^n \to Z^2$ is defined by
\begin{equation}\label{EQN:Tt}
\left (T_t d \right ) (\theta) := \begin{cases}
e^{(t+\theta)B} d & -t < \theta \leq 0\, ,\\
0 & -r \leq \theta \leq -t\, ,
\end{cases}\, ,\quad  d\in \RR^n \, ,
\end{equation}
$e^{(t+\theta)B} $  being the semigroup generated by the finite dimensional $n \times n$ matrix $B$, as follows
\[
e^{tB} := \sum_{i=0}^\infty\frac{\left (t B\right )^i}{i!}\, .
\]
\end{Theorem}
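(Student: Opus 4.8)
The plan is to read $(\AAA_0,D(\AAA_0))$ as a block operator on $\EP=\XX\times Z^2$ whose diagonal blocks $A_{\mathfrak a}$ and $A_\theta$ interact only through the boundary constraint $\eta(0)=d$ encoded in $D(A_\theta)$. This is exactly the abstract delay structure of Bátkai--Piazzera \cite{Bat1,Bat2}: the shift generator $A_\theta$ on the history space $Z^2$ is driven at the endpoint $\theta=0$ by the finite-dimensional $\RN$-component of the $\XX$-dynamics, while the $X^2\times\RN$-block evolves autonomously under $T_{\mathfrak a}$. Since Proposition \ref{PROP:E!Form} already guarantees that $A_{\mathfrak a}$ generates the analytic contraction semigroup $T_{\mathfrak a}$ on $\XX$, it remains only to treat the shift and the coupling block $T_t$ and then to glue everything together, rather than to build a semigroup from scratch.

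First I would verify directly that the family $\mathcal T_0(t)$ in \eqref{EQN:SGrT0} is a $C_0$-semigroup on $\EP$. The identity $\mathcal T_0(0)=I$ is immediate once one observes from \eqref{EQN:Nilpotent} that $T_0(0)$ is the identity on $Z^2$ and from \eqref{EQN:Tt} that the coupling block $T_t$ vanishes in $Z^2$ at $t=0$. Strong continuity splits into the continuity of $T_{\mathfrak a}$ (Proposition \ref{PROP:E!Form}), the classical strong continuity of the left-shift $T_0(t)$ on $L^2$, and the continuity of $t\mapsto T_t d$ into $Z^2$, which I would read off from the explicit bound $\|T_t d\|_{Z^2}^2=\int_{-t}^0|e^{(t+\theta)B}d|^2\,d\theta$ together with dominated convergence. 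The semigroup law $\mathcal T_0(t+s)=\mathcal T_0(t)\mathcal T_0(s)$ reduces, after invoking that $T_{\mathfrak a}$ is a semigroup and that $T_0(t+s)=T_0(t)T_0(s)$, to a single cross-term identity on the $Z^2$-component; I would prove it by splitting $[-r,0]$ at the moving threshold $\theta=-t$ and using the matrix semigroup law $e^{(t+\theta)B}e^{sB}=e^{(t+s+\theta)B}$ on the overlap, so that the contribution of $T_t$ applied to the propagated boundary datum, together with $T_0(t)T_s$, reassembles exactly $T_{t+s}$.

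Then I would identify the generator. For $(u,d,\eta)\in D(\AAA_0)$, with $(u,d)\in D(A_{\mathfrak a})$ and $\eta\in D(A_\theta)$, I would compute $\lim_{t\downarrow0}t^{-1}\bigl(\mathcal T_0(t)-I\bigr)(u,d,\eta)$ componentwise. The first two components return $A_{\mathfrak a}(u,d)$ by Proposition \ref{PROP:E!Form}. On the $Z^2$-component, differentiating $T_0(t)\eta$ produces the weak derivative $\partial_\theta\eta$, that is $A_\theta\eta$, whereas the contribution of $T_t d$ survives only through the endpoint and forces the compatibility relation $\eta(0)=d$, which is precisely the boundary condition defining $D(A_\theta)$. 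Hence the strong generator is $(\AAA_0,D(\AAA_0))$ and nothing larger; the closedness of $\AAA_0$, inherited from the closedness of its two diagonal blocks, together with an elementary resolvent/Hille--Yosida bookkeeping, rules out any enlargement of the domain in the limit.

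The main obstacle I expect is twofold. Analytically, the continuity of the coupling $t\mapsto T_t d$ at $t=0$ and the cross-term in the semigroup law are delicate because of the moving cutoff at $\theta=-t$, where the two pieces defining $T_t$ in \eqref{EQN:Tt} and $T_0(t)$ in \eqref{EQN:Nilpotent} must match without overlap; this matching is exactly what the relation $e^{(t+\theta)B}e^{sB}=e^{(t+s+\theta)B}$ supplies. Conceptually, the crux is the generator's domain: one must show that the boundary condition $\eta(0)=d$ emerges in the differential limit and is neither lost nor strengthened, so that $\mathcal T_0(t)$ genuinely leaves $D(\AAA_0)$ as in \eqref{EQN:OpDiagA} invariant and admits $\AAA_0$ as its generator rather than some proper extension. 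Once these two points are settled, everything else is routine semigroup bookkeeping under Assumptions \ref{ASS:1}.
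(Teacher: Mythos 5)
Your overall strategy is reasonable, but it diverges from the paper's at the decisive step, and the part you defer to ``elementary resolvent/Hille--Yosida bookkeeping'' is in fact the whole content of the paper's argument. The paper does not compute differential quotients at all: after asserting strong continuity of $\mathcal{T}_0(t)$, it identifies the generator by taking the Laplace transform $\int_0^\infty e^{-\lambda t}\mathcal{T}_0(t)\,dt$ entrywise (the key computation being $\int_0^\infty e^{-\lambda t}(T_t d)(\theta)\,dt=e^{\lambda\theta}R(\lambda,B)d$) and matching it against the explicit solution of $(\lambda-\AAA_0)(u,d,\eta)=(v,h,\zeta)$ under the constraint $\eta(0)=d$, namely $\eta(\theta)=e^{\lambda\theta}\bigl(d+\int_\theta^0 e^{-\lambda s}\zeta(s)\,ds\bigr)$. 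Your route --- differentiating $\mathcal{T}_0(t)$ at $t=0$ on $D(\AAA_0)$ --- only shows that the true generator $G$ \emph{extends} $\AAA_0$ restricted to $D(\AAA_0)$. Closedness of $\AAA_0$ does not exclude that $G$ is a proper closed extension; the standard way to conclude $G=\AAA_0$ is to show that $\lambda-\AAA_0$ is already surjective from $D(\AAA_0)$ onto $\EP$ for some $\lambda\in\rho(G)$, and that surjectivity claim \emph{is} the resolvent computation above. A complete version of your proof would therefore still have to reproduce the paper's central calculation; as written, the domain identification is a genuine gap, not bookkeeping.

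A second concrete difficulty sits in the cross term of the semigroup law, which you propose to close via $e^{(t+\theta)B}e^{sB}=e^{(t+s+\theta)B}$. Writing $\pi_2$ for the projection of $\XX$ onto the $\RR^n$ boundary component, the $Z^2$ entry of $\mathcal{T}_0(t)\mathcal{T}_0(s)$ contains $T_t\bigl(\pi_2 T_\mathfrak{a}(s)(u,d)\bigr)$, so your matching requires $\pi_2 T_\mathfrak{a}(s)(u,d)=e^{sB}d$. That identity holds in the uncoupled B\'atkai--Piazzera model, where the present value evolves by $\dot d=Bd$, but here $A_\mathfrak{a}$ in \eqref{EQN:OpAa} carries the feedback operator $C$ in its lower-left corner, so the boundary trace of $T_\mathfrak{a}(s)$ is not propagated by $e^{sB}$ alone. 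Your direct verification of the semigroup law therefore does not close as described; you would either have to replace $e^{(t+\theta)B}$ in \eqref{EQN:Tt} by the boundary trace of $T_\mathfrak{a}(t+\theta)$ or explain why the discrepancy is harmless. (The same tension is present, less visibly, in the paper's resolvent matrix, whose lower-middle entry is $e^{\lambda\theta}R(\lambda,B)$ rather than $e^{\lambda\theta}\pi_2 R(\lambda,A_\mathfrak{a})$; the paper never checks the semigroup law explicitly, so your approach is precisely the one that exposes this issue and must resolve it.)
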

\begin{proof}
From the strong continuity of $T_\mathfrak{a}$ and $T_0 (t)$ and exploiting the equation \eqref{EQN:Tt}, we  have that  the semigroup $\mathcal{T}_0 (t)$, see equation  \eqref{EQN:SGrT0}, is strongly continuous. Hence, we can compute the resolvent for the semigroup \eqref{EQN:SGrT0}, showing that the corresponding generator is given by \eqref{EQN:OpDiagA}.
To what concerns the resolvent  of the operator $\AAA_0$, namely $R(\lambda,\AAA_0)$, we thus have
\[
R(\lambda,\AAA_0)\XXX = \int_0^\infty e^{-\lambda t} \mathcal{T}_0(t)\XXX dt\, ,\quad \lambda \in \mathbb{C}\, ,\quad \XXX \in \EP\, .
\]
Let us  take $\uu :=(u,d) \in D(A_\mathfrak{a})$ and  $\eta \in H^1([-r,0];\RR^n)$, such that the following holds
\begin{align}
\left (\lambda - A_\mathfrak{a}\right ) (u,d)^T = (v,d^v)^T \, ,\quad (v,h)^T \in \XX  \label{EQN:Res1}\, ,\\
\lambda \eta - \eta' = \zeta \, ,\quad \eta(0) = d \, ,\quad \zeta \in Z^2 \, , \, \label{EQN:Res2}\, ,
\end{align} 
then a solution to equation \eqref{EQN:Res2} is given by
\[
\eta(\theta) = e^{\lambda \theta}\left (d + \int_\theta^0 e^{-\lambda t} \zeta(t) dt\right )\, .
\]
Moreover, if we indicate with $A^0_\theta$ the infinitesimal generator of the nilpotent left shift, namely
\[
A^0_\theta \eta = \eta' \, \quad D(A^0_\theta) = \{\eta \in H^1([-r,0];\RR^n) \, : \, \eta(0) = 0 \}\, ,
\]
we have that its resolvent is given by
\[
\left (R(\lambda,A^0_\theta)\zeta\right )(\theta) = e^{\lambda \theta} \int_{\theta}^0 e^{-\lambda t} \zeta(t) dt \, ,
\]
see, e.g., \cite{Eng}, therefore, taking $\mathbf{Y} =(v,h,\zeta)^T $, the resolvent for $\AAA_0$ reads as follows
\[
\begin{split}
R(\lambda,\AAA_0) \mathbf{Y} &= \left ( R(\lambda,A_\mathfrak{a})(v,h) , e^{\lambda \theta}R(\lambda,B) h + R(\lambda,A^0_\theta) \zeta \right )^T =\\
&=\left(
\begin{array}{cc|c}
\qquad \qquad {\huge\mbox{{$R(\lambda,A_\mathfrak{a})$}}} &&0 \\
 && 0\\ \hline
 0 & e^{\lambda \theta} R(\lambda,B) & R(\lambda,A_\theta^0) (t)\\[0.5ex]
\end{array}
\right) \mathbf{Y} \, .
\end{split}
\]
Summing up, the result follows noticing that
\[
\begin{split}
&\int_0^\infty e^{-\lambda t} \left (T_t d \right )(\theta) dt =\int_{-\theta}^\infty e^{-\lambda t} e^{(t + \theta)B} d(t) dt =\\
&= e^{\lambda \theta} \int_0^\infty e^{(t + \theta)B} d(t) dt = e^{\lambda \theta}R(\lambda,B)\, ,
\end{split}
\]
so that, we have
\[
R(\lambda,\AAA_0) = \int_0^\infty e^{-\lambda t} \mathcal{T}_0(t) dt\, ,
\]
which implies that  the semigroup $\left  (\mathcal{T}_0(t)\right )_{t \geq 0}$, defined in equation \eqref{EQN:SGrT0}, is generated by $\left (\AAA_0,D(\AAA_0)\right )$ in \eqref{EQN:OpDiagA}.
\end{proof}

In what follows we prove that the matrix operator $\left (\AAA,D(\AAA)\right )$ \eqref{EQN:OpA} generates a $C_0-$semigroup on the Hilbert space $\EP$, exploiting a perturbation approach. In particular,  we  exploit firstly the \textit{Miyadera-Voigt perturbation theorem}, see, e.g., \cite[Cor. III.3.16]{Eng}, 
which states the following
\begin{Theorem}\label{THM:MV}
Let $(G,D(G))$ be the generator of a strongly continuous semigroup $\left (S(t)\right )_{t \geq 0}$, defined on a Banach space $X$, and let $K \in \mathcal{L}\left (\left (D(G),\| \cdot\|_G\right ); X\right )$. Assume that there exist constants $t_0 >0$ and $0 \leq q < 1$, such that
\begin{equation}\label{EQN:MV}
\int_0^{t_0} \| KS(t) x \| dt \leq q \| x\| \, ,\quad \forall \, x \in D(G)\, .
\end{equation}
Then $(G+K,D(G))$ generates a strongly continuous semigroup $\left (U(t)\right )_{t \geq 0}$ on $X$, which satisfies
\begin{equation}\label{EQN:ConvDel}
U(t) x = S(t) x + \int_0^t  S(t-s) K U(s) x ds\, ,
\end{equation}
and 
\[
\int_0^{t_0} \| KU(t)x\| dt \leq \frac{q}{1-q} \|x\|\, ,\quad \forall \, x \in D(G)\, , \, t \geq 0\, .
\]
\end{Theorem}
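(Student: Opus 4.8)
The plan is to construct the perturbed semigroup $(U(t))_{t\ge 0}$ explicitly as a Dyson--Phillips (abstract Volterra) series and to read off both the variation-of-constants identity \eqref{EQN:ConvDel} and the final estimate directly from the construction. The starting observation is that the Miyadera condition \eqref{EQN:MV} says \emph{precisely} that the linear map $x\mapsto KS(\cdot)x$, defined a priori on $D(G)$, is bounded from $X$ into $L^1([0,t_0];X)$ with norm at most $q$; for $x\in D(G)$ the function $t\mapsto KS(t)x$ is continuous (as $t\mapsto S(t)x$ is continuous in the graph norm and $K$ is bounded for that norm), so it is genuinely an $L^1$-function, and by density of $D(G)$ the map extends to a bounded operator $\mathcal{S}\colon X\to L^1([0,t_0];X)$ with $\|\mathcal{S}\|\le q$ and extended bound $\int_0^{t_0}\|KS(\tau)y\|\,d\tau\le q\|y\|$ for all $y\in X$. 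Combining this with Fubini's theorem, I would show that the Volterra convolution operator $(Wf)(t):=\int_0^t KS(t-s)f(s)\,ds$ is bounded on $L^1([0,t_0];X)$ with $\|W\|\le q<1$.

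Since $\|W\|<1$, the fixed-point equation $g=\mathcal{S}x+Wg$ has the unique solution $g=\sum_{n\ge 0}W^n\mathcal{S}x$, and summing the geometric series yields $\|g\|_{L^1}\le \frac{1}{1-q}\|\mathcal{S}x\|\le \frac{q}{1-q}\|x\|$. The function $g$ is designed to represent $t\mapsto KU(t)x$, so this is already the asserted final bound. I would then \emph{define} $U(t)x:=S(t)x+\int_0^t S(t-s)g(s)\,ds$; as $g\in L^1$ and $S$ is bounded on $[0,t_0]$, each $U(t)$ extends to a bounded operator with $\sup_{t\le t_0}\|U(t)\|<\infty$, and the defining relation $g=\mathcal{S}x+Wg$ is exactly \eqref{EQN:ConvDel} once the identity $g(s)=KU(s)x$ is in place.

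That identification $g(s)=KU(s)x$ is the delicate point, and the step I expect to be the main obstacle: it requires showing that $U(s)x\in D(G)$ for almost every $s$ and that the unbounded operator $K$ may be interchanged with the convolution integral, which I would justify by an approximation argument exploiting the closedness of $G$ and the relative ($G$-)boundedness of $K$. Granting this, the remaining steps are comparatively routine. Strong continuity of $U$ follows from that of $S$ together with continuity of translation on the $L^1$-convolution term; the semigroup law $U(t+s)=U(t)U(s)$ follows from uniqueness of solutions of the Volterra equation \eqref{EQN:ConvDel}, after which $U$ is extended to all of $[0,\infty)$ by iteration. Finally, to identify the generator $H$ of $U$, I would differentiate \eqref{EQN:ConvDel} at $t=0$: for $x\in D(G)$ one has $g(0^+)=Kx$, whence $\lim_{t\downarrow 0}t^{-1}(U(t)x-x)=Gx+Kx$, giving $(G+K)\subseteq H$. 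Equality of domains $D(H)=D(G)$ then follows from a Neumann-series/resolvent argument: for $\mathrm{Re}\,\lambda$ large the Miyadera condition makes $I-KR(\lambda,G)$ invertible, and $R(\lambda,G)\bigl(I-KR(\lambda,G)\bigr)^{-1}$ maps $X$ into $D(G)$ and is the resolvent of $(G+K,D(G))$, hence coincides with $R(\lambda,H)$, so $H=G+K$ with $D(H)=D(G)$.
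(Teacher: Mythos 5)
The paper never proves Theorem \ref{THM:MV}: it is quoted as a known result, with the proof delegated to Engel--Nagel (Cor.\ III.3.16 of the cited monograph), so there is no internal argument to compare yours against. Your route --- reading the Miyadera condition \eqref{EQN:MV} as saying that $x\mapsto KS(\cdot)x$ extends to a map $\mathcal{S}\colon X\to L^1([0,t_0];X)$ of norm $\le q$, solving the Volterra equation $g=\mathcal{S}x+Wg$ by a geometric series, and defining $U$ through \eqref{EQN:ConvDel} --- is exactly the standard Dyson--Phillips construction that underlies the textbook proof, and the pieces you do carry out (the bound $\|W\|\le q$, the estimate $\|g\|_{L^1}\le\frac{q}{1-q}\|x\|$, strong continuity, the semigroup law via uniqueness for the Volterra equation, and the closing resolvent argument, which does work since $\limsup_{\lambda\to\infty}\|KR(\lambda,G)\|\le q<1$ by splitting $\int_0^\infty$ into blocks of length $t_0$) are all sound.

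The one genuine gap is precisely the step you flag but do not close: the identification $g(s)=KU(s)x$. As stated, ``an approximation argument exploiting the closedness of $G$'' is circular. If you approximate $g$ in $L^1$ by nice functions $g_n$ and set $v_n(t)=\int_0^t S(t-s)g_n(s)\,ds$, then closedness of $G$ lets you conclude $v(t)\in D(G)$ and $Kv_n(t)\to Kv(t)$ only if you already have convergence of $Gv_n(t)$ (equivalently of $Kv_n(t)$) in $X$ --- which is the very quantity you are trying to control; for $g$ merely in $L^1$ there is no reason the convolution $\int_0^t S(t-s)g(s)\,ds$ should lie in $D(G)$ at all. The standard repair is to avoid ever applying $K$ to the convolution: define the iterated kernels directly by $(\mathcal{S}_0x)(t):=KS(t)x$ and $(\mathcal{S}_{n+1}x)(t):=\int_0^t\bigl(\mathcal{S}_0((\mathcal{S}_nx)(s))\bigr)(t-s)\,ds$ (using the continuous extension of $\mathcal{S}_0$ to $X$), prove $\int_0^{t_0}\|(\mathcal{S}_nx)(t)\|\,dt\le q^{n+1}\|x\|$ by induction and Fubini, set $g=\sum_n\mathcal{S}_nx$ and $U(t)x=S(t)x+\int_0^tS(t-s)g(s)\,ds$, and then identify the generator of $U$ with $(G+K,D(G))$ through Laplace transforms: $KR(\lambda,G)$ is everywhere defined and bounded because $R(\lambda,G)$ maps into $D(G)$, and $\widehat{U}(\lambda)=R(\lambda,G)\bigl(I-KR(\lambda,G)\bigr)^{-1}=R(\lambda,G+K)$. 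In this scheme the equality $g(s)=KU(s)x$ for $x\in D(G)$ becomes a consequence of the construction rather than its missing ingredient. Without some version of this detour your argument does not yet constitute a proof.
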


Let us now to consider the operator matrix
\[
\AAA_1  :=\left( \begin{array}{ccc}
0 & 0 & 0 \\
0 & 0 & \Phi\\
0 & 0 & 0\\
\end{array} \right)\, \in \mathcal{L}\left (D(\AAA_0),\EP\right )\, ,
\]
where $\Phi$ is the delay operator defined in equation \eqref{delayoperator}. Exploiting Theorem \ref{THM:MV} we show that, under a suitable assumption on  $\Phi$, the matrix operator $\AAA= \AAA_0 + \AAA_1$ generates a $C_0-$semigroup on $\EP$. 

\begin{Theorem}\label{THM:GenAGen}
Let  assumptions \ref{ASS:1} hold true, then the operator $\left (\AAA,D(\AAA)\right )$ defined in equation \eqref{EQN:OpA}, generates a strongly continuous semigroup.
\end{Theorem}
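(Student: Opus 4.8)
The plan is to realize $\AAA$ as a Miyadera--Voigt perturbation of the diagonal generator $\AAA_0$ from Theorem \ref{THM:E!Gen}, taking $K=\AAA_1$. Since Theorem \ref{THM:E!Gen} already provides that $(\AAA_0,D(\AAA_0))$ generates the strongly continuous semigroup $\mathcal{T}_0(t)$, and since $\AAA_1$ maps into the second (finite-dimensional, $\RR^n$) block via the bounded delay operator $\Phi$, the only thing left to verify is the Miyadera condition \eqref{EQN:MV}: there exist $t_0>0$ and $0\le q<1$ with $\int_0^{t_0}\|\AAA_1\mathcal{T}_0(t)\XXX\|\,\de t\le q\|\XXX\|$ for all $\XXX\in D(\AAA_0)$. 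Once this is established, Theorem \ref{THM:MV} delivers immediately that $\AAA=\AAA_0+\AAA_1$ generates a $C_0$-semigroup on $\EP$, and that semigroup automatically satisfies the variation-of-constants formula \eqref{EQN:ConvDel}.

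First I would compute $\AAA_1\mathcal{T}_0(t)\XXX$ explicitly. Writing $\XXX=(u,d,\eta)^T$, the block structure of $\AAA_1$ shows that $\AAA_1\mathcal{T}_0(t)\XXX$ has only a nonzero $\RR^n$-component, equal to $\Phi$ applied to the $Z^2$-component of $\mathcal{T}_0(t)\XXX$. From \eqref{EQN:SGrT0} that $Z^2$-component is $T_t d + T_0(t)\eta$, where $T_t$ and the nilpotent shift $T_0(t)$ are given by \eqref{EQN:Tt} and \eqref{EQN:Nilpotent}. Hence $\|\AAA_1\mathcal{T}_0(t)\XXX\|_{\EP}=|\Phi(T_t d+T_0(t)\eta)|_{\RR^n}$, and using boundedness of $\Phi$ (with $\mu$ of bounded variation) together with the bound $\|e^{sB}\|\le 1$ coming from $B\le 0$ under Assumptions \ref{ASS:1}, I would estimate this by a constant times $|d|+\|\eta\|$. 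The crucial point is that for $t\le r$ the supports of $T_t d$ and $T_0(t)\eta$ overlap only near $\theta=0$ in a way that shrinks with $t$, so integrating over $t\in[0,t_0]$ yields a factor that tends to $0$ as $t_0\downarrow 0$; one then fixes $t_0$ small enough (in particular $t_0\le r$) to force the resulting constant $q$ below $1$.

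The main obstacle, and the step requiring genuine care rather than routine bookkeeping, will be controlling $\int_0^{t_0}|\Phi(T_0(t)\eta)|\,\de t$ uniformly in $\eta$, because $\Phi$ is only assumed bounded on $C([-r,0];\RR^n)$ whereas $\eta$ ranges over the $L^2$-type path space $Z^2$ (or its $H^1$-restriction from $D(A_\theta)$). The natural resolution is to use that on $D(\AAA_0)$ the segment $\eta$ lies in $H^1([-r,0];\RR^n)$, so the shifted segments $T_0(t)\eta$ remain in a space on which $\Phi$ acts continuously, and to exploit that the nilpotent shift has finite memory $r$: the map $t\mapsto T_0(t)\eta$ is continuous into the path space with norm controlled by $\|\eta\|_{H^1}$, and a Fubini/Cauchy--Schwarz argument converts the $t$-integral of $|\Phi(T_0(t)\eta)|$ into a bound that is $o(1)$ as $t_0\to 0$. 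A completely analogous and simpler estimate handles the $T_t d$ term. Collecting both contributions and choosing $t_0$ sufficiently small produces $q<1$, which is exactly the hypothesis of Theorem \ref{THM:MV}, completing the proof.
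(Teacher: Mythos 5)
Your proposal follows essentially the same route as the paper: both apply the Miyadera--Voigt theorem to $\AAA=\AAA_0+\AAA_1$, reduce condition \eqref{EQN:MV} to estimating $\int_0^{t_0}\bigl|\Phi\bigl(T_t d+T_0(t)\eta\bigr)\bigr|\,\de t$, and control it via the standard B\'atkai--Piazzera Fubini/Cauchy--Schwarz argument, yielding a bound of order $\sqrt{t_0}\,(\|\eta\|_2+|d|)$ so that $t_0$ can be chosen small enough to make $q<1$. The subtlety you flag about $\Phi$ acting on $Z^2$ versus $H^1$-segments is handled in the paper exactly as you propose, so your argument is correct and matches the paper's proof.
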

\begin{proof}
The result  follows applying  the \textit{Miyadera-Voigt perturbation theorem} \ref{THM:MV}, together with the assumption for the \textit{delay operator} $\Phi$ to be bounded, see equation \eqref{delayoperator}, therefore the perturbation operator $\AAA_1$ is bounded. 
In fact, from the boundness of $\Phi$, we have that, for $\XXX=(u,d,\eta)^T$, it holds
\[
\int_0^{t_0} \left | \AAA_1 \mathcal{T}_0(t) \XXX \right | dt = \int_0^{t_0} \left |\Phi\left ( T_t d + T_0(t) \eta\right ) \right | dt \, .
\]

Thus, following \cite[Example 3.1 (b)]{Bat2} we have that, denoting in what follow by $|\mu|$ the positive Borel measure defined by the total variation of the measure $\mu$,
\[
\begin{split}
& \int_0^{t} \left |\Phi\left ( T_s d + T_0(s) \eta\right ) \right | ds = \int_0^t \left | \int_{-r}^{-s} \eta(s+\theta) \mu(d\theta) + \int_{-s}^0 \left (e^{(s+\theta)B }d\right )\, \mu(d\theta) \right | ds \leq\\
&\leq \int_0^t \int_{-r}^{-s} \left | \eta(s+\theta)\right | |\mu|(d\theta)ds + \int_0^t \int_{-s}^0 \left |e^{(s+\theta)B} d\right | |\mu|(d\theta) ds \leq\\
&\leq \int_{-t}^0 \int_{\theta}^{0} \left | \eta(s)\right |ds |\mu|(d\theta) + \int_{-r}^{-t} \int_{\theta}^{t+\theta} \left | \eta(s)\right |ds |\mu|(d\theta) + \int_0^t \sup_{s \in [0,r] } \left |e^{sB}\right | |d| |\mu| ds \, .
\end{split}
\]

Denoting now by
\[
K := \sup_{s \in [0,r] } \left |e^{sB} \right |\, ,
\]
we have
\[
\begin{split}
&\int_{-t}^0 \int_{\theta}^{0} \left | \eta(s)\right |ds |\mu|(d\theta) + \int_{-r}^{-t} \int_{\theta}^{t+\theta} \left | \eta(s)\right |ds |\mu|(d\theta) + \int_0^t K |d| |\mu| ds \leq \\
&\leq \int_{-t}^0 \sqrt{-\theta} \left \| \eta\right \|_2 |\mu|(d\theta) + \int_{-r}^{-t} \sqrt{t} \left \| \eta\right \|_2|\mu|(d\theta) + t K |d| |\mu| \leq \\
&\leq  \int_{-r}^{0} \sqrt{t} \left \| \eta\right \|_2|\mu|(d\theta) + t K |d| |\mu|  = \left (\sqrt{t}\|\eta\|_2 + t K |d|\right )|\mu|\, .
\end{split}
\]

Choosing thus $t_0$ small enough such that
\[
q:= \sqrt{t_0} K |\mu| < 1\, ,
\]
we have that
\[
\int_0^{t_0} \|\Phi\left ( T_t d + T_0(t) \eta\right ) \| dt \leq q \|(\eta,d)\|\, ,
\]
choosing thus $t_0$ such that equation \eqref{EQN:MV} is satisfied, the claim therefore follows.
\end{proof}

We note that Theorem\ref{THM:GenAGen} holds for  more general type of \textit{delay operators}, namely taking into consideration weaker assumptions on its definition. In fact, by the result contained in \cite[Th. 1.17]{Bat1}, we have that  $\left (\AAA,D(\AAA)\right )$,  defined in equation \eqref{EQN:OpA}, generates a strongly continuous semigroup  for a general operator
\[
\Phi: H^1([-r,0];\RR^n) \to \RR^n\, ,
\]
provided that  there exist $t_0>0$ and $0 < q <1$ such that
\[
\int_0^{t_0} \| \Phi (S_t \uu + T_0(t) \eta)\|dt \leq q \| (\uu,\eta)\|\, .
\]

\begin{Remark}
The \textit{Miyadera-Voigt perturbation theorem} \ref{THM:MV} implies that the perturbed semigroup $\left (\TT\right )_{t \ge 0}$ is given in terms of the \textit{Dyson--Phillips series}
\begin{equation}\label{EQN:DPS}
\TT x = \sum_{n=0}^\infty \mathcal{T}^n(t) x \, ,
\end{equation}
where each operator $\mathcal{T}^n(t) x$ is defined inductively as
\[
\mathcal{T}^0(t) x:= \mathcal{T}_0(t)x\,,
\]
and
\begin{equation}\label{EQN:DPN}
\mathcal{T}^n(t) x:= \int_0^t \mathcal{T}^{n-1}(t-s) \mathcal{A}_1 \mathcal{T}_0(s) x ds\, .
\end{equation}
\end{Remark}

\section{The perturbed stochastic problem}\label{SEC:AbC}

In the present section we study the system defined  in \eqref{EQN:Sys}  perturbed by a multiplicative Gaussian noise. 
We will carry out our analysis with respect to  the following standard, complete and filtered probability space $\left (\Omega, \mathcal{F},\left (\mathcal{F}_t\right )_{t \geq 0}, \mathbb{P}\right )$, then we define the following system

{\footnotesize
\begin{equation}\label{EQN:SysPerW}
\begin{cases}
\dot{u}_j (t,x) =  \left (c_{j} u'_j\right )' (t,x)+ g_j(t,x,u_j(t,x)) \dot{W}_j^1(t,x)  \, , \\
\qquad \qquad \qquad \qquad \qquad \qquad \qquad \qquad \qquad \qquad t \geq 0\, ,\, x \in (0,1)\, , \, j = 1,\dots,m\, ,\\[1.5ex]
u_j(t,\mathrm{v}_\alpha) = u_l(t,\mathrm{v}_\alpha) =: d^\alpha(t)\, , \quad t \geq 0\, ,\, l,\, j \in \Gamma(\mathrm{v}_i)\, , \, j = 1,\dots,m\, ,\\[1.5ex]
\dot{d}^\alpha (t) = - \sum_{j=1}^m \phi_{j \alpha} u'_j(t,\mathrm{v}_\alpha) + b_{\alpha} d^\alpha(t) + \int_{-r}^0 d^\alpha(t+\theta) \mu(d \theta)+ \tilde{g}_\alpha(t,d^\alpha(t),d^\alpha_t) \dot{W}^2_\alpha (t,\mathrm{v}_{\alpha})\, ,\\
\qquad \qquad \qquad \qquad \qquad \qquad \qquad \qquad \qquad \qquad \qquad \qquad \qquad \qquad  t \geq 0 \, ,\, \alpha=1,\dots,n\, ,\\[1.5ex]
u_j(0,x) = u^0_j(x) \, ,\quad x \in (0,1)\, ,\, j=1,\dots,m\, ,\\[1.5ex]
d^\alpha(0) = d_\alpha^0\, ,\quad \alpha=1,\dots,n\, ,\\[1.5ex]
d^\alpha(\theta) = \eta_\alpha^0(\theta)\, ,\quad \theta \in [-r,0] \, ,\, \alpha=1,\dots,n\, .\\
\end{cases}
\end{equation}
}
where  $W^1_j$ and $W^2_\alpha$, $j=1,\dots,m$, $\alpha=1,\dots, n_0$, are independent $\mathcal{F}_t -$adapted space time Wiener processes to be specified in a while, and $\dot{W}$ indicates the {\it formal} time derivative. In particular $W^1_j$, $j=1,\dots,m$, is a space time Wiener process taking values in $L^2(0,1)$, consequently we  denote by $W^1=(W^1_1,\dots,W^1_m)$ a space time Wiener process with values in $X^2 := \left (L^2(0,1\right )^m$. Similarly, we have that each $W^2_\alpha$, $\alpha=1,\dots,n$, is a space time Wiener process with values in $\RR$, so that we denote by $W^2=(W^2_1,\dots,W^2_n)$ the standard Wiener process with values in $\RR^n$. Eventually, we indicate by $W := (W^1,W^2)$ a standard space time Wiener process with values in $\XX := X^2 \times \RR^n$.

In what follows we require both the assumptions stated in  \ref{ASS:1}, as well as the following
\begin{Assumptions}\label{ASS:3}
\begin{description}
\item[(i)] The functions 
\[
g_j: [0,T] \times [0,1] \times \RR \to \RR\, , \quad j=1,\dots,m\,,
\]
are measurable, bounded and uniformly Lipschitz with respect to  the third component, namely there exist $C_j>0$ and $K_j>0$, such that, for any $(t,x,y_1) \in [0,T] \times [0,1] \times \RR$ and $(t,x,y_2) \in [0,T] \times [0,1] \times \RR$, it holds
\[
|g_j(t,x,y_1)| \leq C_j \, , \quad |g_j(t,x,y_1)-g_j(t,x,y_2)| \leq K_j |y_1 - y_2| \, ;
\]
\item[(ii)] The functions 
\[
\tilde{g}_\alpha : [0,T] \times \RR \times Z^2 \to \RR\, ,\quad \alpha=1,\dots,n_0\,,
\]
are measurable, bounded and uniformly Lipschitz with respect to the second component, namely there exist $C_\alpha>0$ and $K_\alpha>0$, such that, for any $(t,u,\eta) \in [0,T] \times \RR \times Z^2$ and $(t,v,\zeta) \in [0,T] \times \RR \times Z^2$, it holds
\[
|\tilde{g}_\alpha(t,u,\eta)| \leq C_\alpha \, , \quad |\tilde{g}_\alpha(t,u,\eta)-\tilde{g}_\alpha(t,v,\zeta)| \leq K_\alpha( |u-v|_n + | \eta - \zeta |_{Z^2})\, .
\]
\end{description}
\end{Assumptions}
Using previously introduced notations, the problem in \eqref{EQN:SysPerW} can be rewritten as the following  abstract infinite dimensional Cauchy problem 

\begin{equation}\label{EQN:Mild}
\begin{cases}
d \XXX(t) = \mathcal{A}\XXX(t) dt + G(t,\XXX(t)) dW(t) \, ,\quad t \geq 0\, ,\\
\uu(0) = \uu_0 \in \EP\, ,
\end{cases}
\end{equation}
where $\AAA$ is the operator introduced in \eqref{EQN:OpA},  the map $G$ is defined as the following application
\[
G:[0,T] \times \EP \to \mathcal{L}(\XX;\EP)\,,
\]
being $\mathcal{L}(\XX;\EP)$ the space of linear and bounded operator from $\XX$ to $\EP$, equipped with standard norm $|\cdot|_{\mathcal{L}}$,  other terms are intended such as they have been  defined within Sec. \ref{SEC:InfGen}, and  $W = (W^1,W^2)$ is a $\XX-$valued standard Brownian motion.

In particular, if $\XXX = (\uu,\eta)^T = (u,y,\eta) \in \EP$, and $ \vv = (v,z) \in \XX$, then $G$ is defined as
\begin{equation}\label{EQN:G}
G(t,\XXX)\vv = \left (\sigma_1(t,u)v,\sigma_2(t,y,\eta)z,0\right )^T \, ,
\end{equation}
with
\[
\begin{split}
&\left (\sigma_1(t,u)v\right )(x) = \left (g_1(t,x,u_1(t,x)),\dots,g_m(t,x,u_m(t,x)) \right )^T\, ,\\
& \sigma_2(t,y,\eta)z = \left (\tilde{g}_1(t,y_1,\eta)z_1,\dots,\tilde{g}_{n}(t,y_{n},\eta)z_{n}\right )^T\, .
\end{split}
\]
Our next step concerns how to obtain a \textit{mild solution} to equation \eqref{EQN:Mild}, namely a solution  defined in the following sense
\begin{Definition}\label{DEF:Mild}
We will say that $\XXX$ is \textit{mild solution} to equation \eqref{EQN:Mild} if it is a mean square continuous $\EP-$valued process, adapted to the filtration generated by $W$, 
 such that, for any $t \geq 0$, we have that $\XXX \in L^2\left (\Omega,C([0,T];\EP)\right )$ and it holds
\begin{equation}\label{EQN:MildSolApp}
\XXX(t) = \TT \XXX_0 +  \int_0^t \mathcal{T}(t-s) G(s,\XXX(s)) dW(s) \,, \quad t \geq 0\, .
\end{equation}
\end{Definition}
In general, in order to guarantee the existence and uniqueness of a mild solution to equation \eqref{EQN:Mild}, we have to require that
\[
G:[0,T]\times \EP \to \mathcal{L}_2(\XX;\EP)\, ,
\]
being $\mathcal{L}_2(\XX;\EP)$ the space of \textit{Hilbert-Schmidt} operator from $\XX$ to $\EP$ equipped with its standard norm denoted as $|\cdot|_{HS}$, see, e.g., \cite[Appendix C]{Dap}. Nevertheless ,  when dealing with  a diffusion problem where the leading term is a second order differential operator, it is enough to require that $G$ takes value in $\mathcal{L}(\XX;\EP)$ since, in this particular case, the map $G$ inherits the needed regularity from the analytic semigroup generated by the second order differential operator. On the other hand, if we consider  a delay operator then,  due to the presence of the first order differential operator $A_\theta$, the operator $\AAA$, defined in equation \eqref{EQN:OpA}, does not generate an analytic semigroup on the space $\EP$. The latter suggests that it seems reasonable  to require $G$ to take values in  $\mathcal{L}_2(\XX;\EP)$, in order to have both existence and uniqueness for a solution to equation \eqref{EQN:Mild}.
In what follows, we will show that, since $A_\mathfrak{a}$ generates an analytic semigroup, and exploiting the particular form for $G$ in equation \eqref{EQN:G}, we have that $\mathcal{T}(t)G(s,\XXX)$ belongs to $\mathcal{L}_2(\XX;\EP)$, hence, by assumptions \ref{ASS:3} on the functions $g$ and $\tilde{g}$, the existence and uniqueness of a mild solution to equation \eqref{EQN:Mild} follows. 

The next result will be  later used in order to show the existence and uniqueness of a mild solution to equation \eqref{EQN:Mild}. 

\begin{Proposition}\label{PRO:HSG}
Let  assumptions \ref{ASS:1}--\ref{ASS:3} hold true, then the map $G:[0,T] \times \EP \to \mathcal{L}(\XX,\EP)$, defined in equation \eqref{EQN:G}, satisfies:
\begin{description}
\item[(i)] for any $\uu \in \XX$ the map $G(\cdot,\cdot) \uu : [0,T] \times \EP \to \EP$, is measurable;\\
\item[(ii)] for any $T >0$, there exists a constant $M>0$, such that for any $t \in [0,T]$ and $s \in [0,T]$, and for any $\XXX$, $\YYY \in \EP$, it holds
\begin{align}
&|\TT G(s,\XXX)|_{HS} \leq M t^{-\frac{1}{4}}(1+|\XXX|_{\EP}) \label{EQN:E!1}\, ,\\
&|\TT G(s,\XXX)-\TT G(s,\YYY)|_{HS} \leq M t^{-\frac{1}{4}}|\XXX-\YYY|_{\EP} \label{EQN:E!2}\, ,\\
&|G(s,\XXX)|_{\mathcal{L}} \leq M (1+|\XXX|_{\EP}) \label{EQN:E!3}\, .
\end{align}
\end{description}
\end{Proposition}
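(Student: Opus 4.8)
The plan is to dispatch (iii) and (i) quickly and to concentrate on the Hilbert--Schmidt bounds \eqref{EQN:E!1}--\eqref{EQN:E!2}. For \eqref{EQN:E!3}, since $G(s,\XXX)$ is the block multiplication operator $\vv=(v,z)\mapsto(\sigma_1(s,u)v,\sigma_2(s,y,\eta)z,0)^T$ and $g_j,\tilde g_\alpha$ are uniformly bounded by Assumptions \ref{ASS:3}, I would simply estimate $|G(s,\XXX)\vv|_{\EP}^2\le(\max_j C_j^2\vee\max_\alpha C_\alpha^2)\,|\vv|_{\XX}^2$, which gives \eqref{EQN:E!3} with a constant in fact independent of $\XXX$. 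For (i), I would use that $g_j,\tilde g_\alpha$ are Carath\'eodory functions (measurable in $(s,x)$, Lipschitz hence continuous in the state and segment variables), so that the induced Nemytskii/multiplication map $(s,\XXX)\mapsto G(s,\XXX)\uu$ is jointly measurable, approximating the state by simple functions if full rigour is wanted.

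The core is (ii). The structural point is that $G(s,\XXX)$ ranges in $\XX\times\{0\}$, so in $\mathcal{T}_0(t)G(s,\XXX)$ only the first two block--columns of \eqref{EQN:SGrT0} survive; writing $\vv=(v,z)$, the $v$--channel produces $T_\mathfrak{a}(t)(\sigma_1(s,u)v,0)^T$ (with zero $Z^2$ part), while the $z$--channel produces $\big(T_\mathfrak{a}(t)(0,\sigma_2(s,y,\eta)z)^T,\,T_t\sigma_2(s,y,\eta)z\big)$. The $z$--channel factors through the finite--dimensional space $\RR^n$, hence is of finite rank and automatically Hilbert--Schmidt, with norm $\le\sqrt n\,|\mathcal{T}_0(t)|_{\mathcal{L}}\,\max_\alpha C_\alpha$ (recall $|T_t|_{\mathcal{L}}\le\sqrt t\,K$), a bounded quantity. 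All the singular behaviour therefore comes from the $v$--channel, i.e.\ the multiplication operator $\sigma_1$ on the infinite--dimensional $X^2$ regularised by the analytic semigroup of Proposition \ref{PROP:E!Form}. The decisive ingredient is $|T_\mathfrak{a}(t)|_{HS}\le Ct^{-1/4}$: since $A_\mathfrak{a}$ is self--adjoint with compact resolvent and eigenvalues $\lambda_k\sim k^2$ (the $k^2$ growth of the underlying one--dimensional operators $(c_j\,\cdot')'$ being unaffected by the boundary coupling), one has $|T_\mathfrak{a}(t)|_{HS}^2=\sum_k e^{-2\lambda_k t}\le Ct^{-1/2}$. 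As the multiplier $g_j(s,\cdot,u_j(\cdot))$ is bounded in $L^\infty$ by $C_j$, the product rule $|T_\mathfrak{a}(t)\sigma_1|_{HS}\le|T_\mathfrak{a}(t)|_{HS}\,|\sigma_1|_{\mathcal{L}}\le Ct^{-1/4}\max_j C_j$ closes \eqref{EQN:E!1} at the level of $\mathcal{T}_0$.

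The Lipschitz bound \eqref{EQN:E!2} is where I expect the real difficulty. The obstruction is that the multiplier difference $g_j(s,\cdot,u_j)-g_j(s,\cdot,u_j')$ is controlled by the Lipschitz assumption only in $L^2$, by $K_j\|u_j-u_j'\|_{L^2}$, and \emph{not} in $L^\infty$, so the product estimate $|T_\mathfrak{a}(t)M|_{HS}\le|T_\mathfrak{a}(t)|_{HS}|M|_{\mathcal{L}}$ is no longer available. I would circumvent this by representing $T_\mathfrak{a}(t)$ through its integral kernel $p_t(x,y)$ and invoking the Gaussian (ultracontractive) bound $\sup_y\int|p_t(x,y)|^2\,dx\le Ct^{-1/2}$, standard for the analytic semigroup of a uniformly elliptic one--dimensional operator; then $|T_\mathfrak{a}(t)(\sigma_1(s,u)-\sigma_1(s,u'))|_{HS}^2=\iint|p_t(x,y)|^2|g(s,y,u)-g(s,y,u')|^2\,dx\,dy\le Ct^{-1/2}K^2\|u-u'\|_{L^2}^2$, which is the required $Ct^{-1/4}|\XXX-\YYY|_{\EP}$. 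The $\sigma_2$-- and $T_t$--contributions, being finite rank in the $\RR^n$ variable, are handled directly by the operator--norm Lipschitz bound $|\sigma_2(s,y,\eta)-\sigma_2(s,y',\eta')|_{\mathcal{L}}\le\max_\alpha K_\alpha(|y-y'|+|\eta-\eta'|_{Z^2})$, yielding only bounded, non--singular terms.

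Finally I would upgrade from $\mathcal{T}_0(t)$ to the perturbed semigroup $\TT$ of Theorem \ref{THM:GenAGen}. The key remark is that $\AAA_1$ maps into the finite--dimensional slot $\{0\}\times\RR^n\times\{0\}$, so, using the Dyson--Phillips series \eqref{EQN:DPS}--\eqref{EQN:DPN} or the variation--of--constants identity \eqref{EQN:ConvDel}, every correction $(\TT-\mathcal{T}_0(t))G(s,\XXX)=\int_0^t\mathcal{T}_0(t-\tau)\AAA_1\mathcal{T}(\tau)G(s,\XXX)\,d\tau$ factors through $\RR^n$ and is therefore finite rank, hence Hilbert--Schmidt, with integrand bounded by $\sqrt n\,|\mathcal{T}_0(t-\tau)|_{\mathcal{L}}\,|\Phi|\,|\mathcal{T}(\tau)|_{\mathcal{L}}\,|G|_{\mathcal{L}}$, which is non--singular in $\tau$. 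Since $t^{-1/4}$ is integrable at the origin, the series converges in Hilbert--Schmidt norm and the total correction stays bounded on $[0,T]$, hence is dominated by the leading $t^{-1/4}$ term; running the same computation with the Lipschitz constants in place of the sup--bounds propagates \eqref{EQN:E!2}. Assembling the leading $\mathcal{T}_0$ estimates with these bounded corrections gives \eqref{EQN:E!1}--\eqref{EQN:E!2} for $\TT$ and completes (ii).
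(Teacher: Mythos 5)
Your proposal is correct in substance and matches the paper on points (i), \eqref{EQN:E!3}, and the unperturbed estimate \eqref{EQN:E!1}: like the paper, you reduce everything to the product of the operator norm of the multiplication operator with the Hilbert--Schmidt norm of the analytic semigroup, $|T_{\mathfrak{a}}(t)|_{HS}\leq Ct^{-1/4}$ (the paper cites \cite[Prop.~10]{BCM} where you invoke Weyl asymptotics), and you dispose of the $\RR^n$-channel by finite rank, exactly as the paper does via $\mathcal{L}_2(\RR^n;Z^2)=\mathcal{L}(\RR^n;Z^2)$. Where you genuinely diverge is instructive on both counts. First, for \eqref{EQN:E!2} the paper only says the proof ``proceeds the same way''; taken literally, the product rule would require the operator norm of $\sigma_1(s,u)-\sigma_1(s,v)$, which is an $L^\infty$ quantity and is \emph{not} controlled by $\|u-v\|_{X^2}$. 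Your kernel/ultracontractivity argument, $\sup_y\int|p_t(x,y)|^2dx\leq Ct^{-1/2}$ combined with the pointwise Lipschitz bound, is a legitimate repair of this step (and such Gaussian bounds are available in this setting, cf.\ \cite{Mug,CDP}); this is a real improvement over what the paper writes. Second, for the passage to the perturbed semigroup the paper runs an absorption argument on the weighted quantity $\sup_{t+q\leq\tilde T}q^{1/4}|\mathcal{T}_0(q)\mathcal{S}(t)|_{HS}$ over a small interval $\tilde T$ and then extends by the semigroup property, whereas you observe that every Dyson--Phillips correction factors through the $n$-dimensional range of $\AAA_1$ and is therefore automatically Hilbert--Schmidt and non-singular; this is cleaner, but your integrand bound written with ``$|\Phi|$'' is too cavalier: $\Phi(\eta)=\int_{-r}^0\eta(\theta)\mu(d\theta)$ is \emph{not} a bounded operator on $Z^2$ for a general Borel measure (e.g.\ the discrete delay $\mu=\delta_{-r}$), which is precisely why the Miyadera--Voigt machinery is needed in the first place. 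The gap is recoverable, because the third component of $\mathcal{T}(\tau)G(s,\XXX)\vv$ is the (continuous) history segment of the second component with zero initial past, so $\Phi$ acts on it with norm controlled by $|\mu|$ times a sup over the trajectory --- this is exactly the structural fact the paper exploits when it bounds the convolution term by $|\mu|\sup_{\theta\in[-r,0]}\int_0^t|\mathcal{T}_0(t-s+q)S(s+\theta)|_{HS}\,ds$ --- but you should make that step explicit rather than treating $\Phi$ as an element of $\mathcal{L}(Z^2;\RR^n)$.
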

\begin{proof}
Point $\textbf{(i)}$  and  \eqref{EQN:E!3}  in point $\textbf{(ii)}$, immediately follow from assumptions \ref{ASS:3}.

 Let $\{\tilde{\phi}_i\}_{i=1}^\infty$, resp. $\{\phi_i\}_{i=1}^\infty$, resp. $\{e_i\}_{i=1}^n$, resp. $\{\psi_i\}_{i=1}^\infty$, be an orthonormal basis in $\XX$, resp. in $X^2$, resp. in $\RR^n$, resp. in $Z^2$.

Let us thus first consider the unperturbed semigroup $\mathcal{T}_0$ given in equation \eqref{EQN:SGrT0}, and let us show that
\[
|\mathcal{T}_0(t) G(s,\XXX) |_{HS} \leq M t^{-\frac{1}{4}}(1+|\XXX|_{\EP}) \, ,
\]
for a suitable constant $M$.

Exploiting  the explicit form for $G$, see equation \eqref{EQN:G}, we have that 
\begin{equation}\label{EQN:HSSomma}
\begin{split}
| \mathcal{T}_0(t) G(s,\XXX) |_{HS}^2 &= \sum_{j,k \in \NN} \left \langle T_\mathfrak{a}(t)(\sigma_1(s,u),\sigma_2(s,d^u,\eta)) \tilde{\phi}_j,\tilde{\phi}_k\right \rangle_{\XX}  +\\
&+ \sum_{i=1}^n  \sum_{k \in \NN} \langle T_t \sigma_2(s,d^u,\eta) e_j,\psi_k\rangle_{Z^2}\, .
\end{split}
\end{equation}

Since $T_\mathfrak{a}$ is self-adjoint and by \cite[Prop. 10]{BCM}, we have that 
\begin{equation}\label{EQN:StimaHSTa}
\begin{split}
&\sum_{j,k \in \NN} \left \langle T_\mathfrak{a}(t)(\sigma_1(s,u),\sigma_2(s,d^u,\eta)) \tilde{\phi}_j,\tilde{\phi}_k\right \rangle_{\XX} =\\
&=\sum_{j,k \in \NN} \left \langle(\sigma_1(s,u),\sigma_2(s,d^u)) \tilde{\phi}_j,T_\mathfrak{a}(t)  \tilde{\phi}_k \right \rangle_{\XX} \leq\\
&\leq \|(\sigma_1(s,u),\sigma_2(s,d^u))\|_{\mathcal{L}(\XX)} |T_\mathfrak{a}(t)|_{\mathcal{L}_2(\XX)} \leq |G(s,\XXX)|_{\mathcal{L}(\XX;\EP)} |T_\mathfrak{a}(t)|_{\mathcal{L}_2(\XX)} \leq\\
&\leq  M t^{-\frac{1}{2}}(1+ |\XXX|_{\EP})\, .
\end{split}
\end{equation}

Concerning the second term in the right hand side of equation \eqref{EQN:HSSomma}, we have that the following holds for any $e_i$
\begin{equation}\label{EQN:HSDiag}
\left (T_t e_i\right ) = \begin{cases}
(0,\dots,0,e^{(t+\theta)b_i},0,\dots,0) &\, ,\, -t < \theta <0\, ,\\
0 &  \, ,\, -r \leq \theta \leq -t\, ,
\end{cases}
\end{equation}
hence, by assumptions \ref{ASS:3}, we also obtain
\[
\langle T_t \sigma_2(s,d^u,\eta) e_i,\psi_k \rangle_{Z^2} = \int_{-t}^0 e^{(t+\theta)b_i} \sigma_2(s,d^u,\eta)  \psi_k d\theta < \infty\, ,
\]
which implies that the second sum on the right hand side of \eqref{EQN:HSSomma} is finite. Moreover, because $\RR^n$ is finite dimensional and $\mathcal{L}_2(\RR^n;Z^2) =\mathcal{L}(\RR^n;Z^2) $, from equations \eqref{EQN:HSSomma}--\eqref{EQN:StimaHSTa}, we immediately have that
the following holds
\begin{equation}\label{EQN:EstUnpSgr}
|\mathcal{T}_0(t) G(s,\XXX)|_{HS} \leq M t^{-\frac{1}{4}}(1+|\XXX|_{\EP})\, .
\end{equation}

In order to prove the claim for the perturbed semigroup $\left (\mathcal{T}(t)\right )_{t \geq 0}$ let us consider Theorem \ref{THM:MV} so that $\left (\mathcal{T}(t)\right )_{t \geq 0}$ is given by equation \eqref{EQN:ConvDel}; in particular we have
\begin{equation}\label{EQN:GenOper}
\mathcal{T}(t) G(s,\XXX) \vv =\mathcal{T}_0(t) G(s,\XXX) \vv + \int_0^t \mathcal{T}_0(t-s) \Phi \mathcal{T}(s) G(s,\XXX) \vv ds\, .
\end{equation}

Let us denote in what follows for short
\begin{equation}\label{EQN:SOper}
\mathcal{T}(t) G(s,\XXX) =\left(\begin{array}{c} \mathcal{S}^1(t) \\ \mathcal{S}^2(t)\\\mathcal{S}^3(t) \end{array}\right) = \mathcal{S}(t)\, .
\end{equation}

Using therefore the particular form for the delay operator given in equation \eqref{EQN:BDel} together with equations \eqref{EQN:EstUnpSgr}--\eqref{EQN:GenOper}, we obtain for $q >0 $ and $t \geq 0$,
\[
\begin{split}
\left |\mathcal{T}_0(q) \mathcal{S}(t)\right |_{HS} &\leq M(t+q)^{-\frac{1}{4}}(1+|\XXX|_{\EP}) +\\
&+ \left |\int_0^t \mathcal{T}_0(t-s+q) \int_{-r}^0 S(s+\theta)\mu(d\theta) ds\right |_{HS} \leq \\
&\leq M(t+q)^{-\frac{1}{4}}(1+|\XXX|_{\EP}) +\\
&+ |\mu| \sup_{\theta \in [-r,0]}\int_0^t \left |\mathcal{T}_0(t-s+q)S(s+\theta)\right |_{HS} ds\, ,
\end{split}
\]

where $|\mu|$ is the total variation of the measure $\mu$.

Thus, from above equation, for a fixed time $\tilde{T} \in [0,T]$, we have for $t+q \leq \tilde{T}$,
\begin{equation}\label{EQN:IneqStima}
\begin{split}
\sup_{t+q \leq \tilde{T}} q^{\frac{1}{4}} \left |\mathcal{T}_0(q) \mathcal{S}(t)\right |_{HS} &\leq M(1+|\XXX|_{\EP}) +\\
&+ |\mu| \sup_{t+q \leq \tilde{T}} q^\frac{1}{4} \left |\mathcal{T}_0(q) \mathcal{S}(t)\right |_{HS} \int_0^t(t-s)^{-\frac{1}{4}}ds\, .
\end{split}
\end{equation}

As regard $\left |\mathcal{T}_0(q) \mathcal{S}(t)\right |_{HS}$ appearing in the right hand side of equation \eqref{EQN:IneqStima}, denoting for short
\[
\mathcal{T}_0(q) \mathcal{S}(t) = \left(\begin{array}{c} \mathcal{V}^1(q) \\ \mathcal{V}^2(q)\\\mathcal{V}^3(q) \end{array}\right) \, ,
\]
it immediately follows from the computation above that
\[
\left |\mathcal{T}_0(q)\left(\begin{array}{c} \mathcal{S}^1(t) \\ \mathcal{S}^2(t)\\ 0 \end{array}\right) \right |_{HS} <\infty\, ;
\]
noticing thus that from the property of the delay semigroup it holds
\[
\left (\mathcal{V}^3(q)\right ) (\theta) = \left (\mathcal{V}^2(q+\theta) \Ind{ \{q+\theta \geq 0\}}\right )_{\theta \in [-r,0]}\, ,
\]
we immediately have that 
\[
\left |\mathcal{V}^3(q)\right |_{\mathcal{L}_2\left (\XX;Z^2\right )}<\infty\, ,
\]
and we can therefore conclude that
\[
\left |\mathcal{T}_0(q) \mathcal{S}(t)\right |_{HS} = \left |\left(\begin{array}{c} \mathcal{V}^1(q) \\ \mathcal{V}^2(q)\\\mathcal{V}^3(q) \end{array}\right)\right |_{HS} <\infty\, ,
\]
and thus the right hand side in equation \eqref{EQN:IneqStima} is finite.

We can therefore choose $\tilde{T}$ independent of $\XXX$ and $s$, such that the following holds
\begin{equation}\label{EQN:IneqStimaBis}
\sup_{t+q \leq \tilde{T}} q^\frac{1}{4} \left |\mathcal{T}_0(q) \mathcal{S}(t)\right |_{HS} \leq \tilde{M} (1+|\XXX|_{\EP}) \, ,
\end{equation}
with $\tilde{M}$ a suitable constant. Therefore, from equations \eqref{EQN:IneqStima}--\eqref{EQN:IneqStimaBis}, from equation \eqref{EQN:GenOper} we thus have for all $t \in (0,\tilde{T}]$,
\[
\left | \mathcal{T}(t) G(s,\XXX)\right |_{HS} \leq Mt^{-\frac{1}{4}}(1+|\XXX|_{\EP}) + \tilde{M}\left (\int_{0}^t (t-s)^{-\frac{1}{4}}ds\right )(1+|\XXX|_{\EP})\, ;
\]
we thus immediately have that, for all $t \in (0,\tilde{T}]$,
\begin{equation}\label{EQN:IneqStimaFin}
\left | \mathcal{T}(t) G(s,\XXX)\right |_{HS} \leq \bar{M} t^{-\frac{1}{4}}(1+|\XXX|_{\EP})\, ,
\end{equation}
with $\bar{M}$ a given constant. Then, by the semigroup property for $\left (\mathcal{T}(t)\right )_{t \geq 0}$, we can extend estimate \eqref{EQN:IneqStimaFin} for all $t \in [0,T]$.

Finally, the proof of the inequality \eqref{EQN:E!2} in $\textbf{(ii)}$ proceeds the same way as the latter one.
\end{proof}
Summing up previous results, we are now in position to state the following
\begin{Theorem}\label{TH:EUMildSol}
Let  assumptions \ref{ASS:1}--\ref{ASS:3} hold true, then there exists a unique mild solution, in the sense of Definition \ref{DEF:Mild}, to equation \eqref{EQN:Mild}.
\end{Theorem}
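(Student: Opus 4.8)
The plan is to realise the solution as the unique fixed point of the integral operator attached to the mild formulation \eqref{EQN:MildSolApp}. I would work in the Banach space $\mathcal{H}_T := L^2\left(\Omega, C([0,T];\EP)\right)$ of $\EP$-valued, $\mathcal{F}_t$-adapted, mean-square continuous processes, endowed with $\norma \XXX \norma_{\mathcal{H}_T}^2 := \sup_{t \in [0,T]} \media |\XXX(t)|_{\EP}^2$, and define on it
\[
(\mathcal{K}\XXX)(t) := \mathcal{T}(t)\XXX_0 + \int_0^t \mathcal{T}(t-s) G(s,\XXX(s))\, dW(s)\, , \quad t \in [0,T]\, ,
\]
so that a mild solution in the sense of Definition \ref{DEF:Mild} is precisely a fixed point of $\mathcal{K}$. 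The three steps are then the classical ones: $\mathcal{K}$ is well defined and maps $\mathcal{H}_T$ into itself; $\mathcal{K}$ is a contraction, possibly after replacing the norm by an equivalent weighted one or passing to an iterate; and the Banach fixed point theorem delivers existence and uniqueness.

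For the self-mapping property, the deterministic term $\mathcal{T}(t)\XXX_0$ is continuous in $t$ since $\left(\mathcal{T}(t)\right)_{t \ge 0}$ is a $C_0$-semigroup by Theorem \ref{THM:GenAGen}. For the stochastic term, measurability of the integrand is ensured by Proposition \ref{PRO:HSG}(i), and the decisive quantitative input is the Hilbert--Schmidt estimate \eqref{EQN:E!1}, which makes $\mathcal{T}(t-s)G(s,\XXX(s))$ a genuine Hilbert--Schmidt (hence admissible) integrand and, through the It\^o isometry, yields
\[
\media \left| \int_0^t \mathcal{T}(t-s) G(s,\XXX(s))\, dW(s) \right|_{\EP}^2 = \media \int_0^t \left| \mathcal{T}(t-s) G(s,\XXX(s)) \right|_{HS}^2 ds \le M^2 \int_0^t (t-s)^{-\frac12}\, \media\left(1+|\XXX(s)|_{\EP}\right)^2 ds\, .
\]
Since $(t-s)^{-1/2}$ is integrable on $[0,t]$, the right-hand side is finite and controlled by $C_T\left(1+\norma\XXX\norma_{\mathcal{H}_T}^2\right)$. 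The second moment does not by itself give mean-square path continuity, which I would instead obtain by the Da Prato--Zabczyk factorization method: writing the convolution through an intermediate exponent $\al$ and the inner stochastic integral $Y(s) := \int_0^s (s-\sigma)^{-\al}\mathcal{T}(s-\sigma)G(\sigma,\XXX(\sigma))\,dW(\sigma)$, finiteness of $\media|Y(s)|_{\EP}^2$ requires $2\al + \tfrac12 < 1$, i.e. $\al \in (0,\tfrac14)$, a range that is nonempty precisely because the exponent in \eqref{EQN:E!1} is $-1/4$; the outer deterministic convolution with kernel $(t-s)^{\al-1}$ then regularises the resulting $L^p$-process, for any $p > 1/\al$, into a continuous one.

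For the contraction I would combine the Lipschitz estimate \eqref{EQN:E!2} with the It\^o isometry to obtain
\[
\media \left| (\mathcal{K}\XXX)(t) - (\mathcal{K}\YYY)(t) \right|_{\EP}^2 \le M^2 \int_0^t (t-s)^{-\frac12}\, \media \left| \XXX(s) - \YYY(s)\right|_{\EP}^2 ds\, .
\]
Because the kernel is only weakly singular, $\mathcal{K}$ need not contract for the plain supremum norm on a fixed interval; I would therefore either introduce the equivalent weighted norm $\sup_{t \in [0,T]} e^{-\be t}\,\media|\XXX(t)|_{\EP}^2$ with $\be$ large, or iterate the inequality and use that the $N$-fold composition of the associated weakly singular Volterra operator has norm strictly below $1$ for $N$ large (the $N$-th iterated kernel being controlled by $t^{N/2}/\Gamma(N/2+1)$). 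Either device produces a genuine contraction, whose unique fixed point $\XXX \in \mathcal{H}_T$ is the sought mild solution, uniqueness being inherited from uniqueness of the fixed point.

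The step I expect to be the main obstacle is not the contraction but the mean-square (path) continuity of the stochastic convolution. Since $\AAA$ fails to generate an analytic semigroup on $\EP$ — owing to the first-order delay component $A_\theta$ — one cannot invoke analytic smoothing, and the whole argument hinges on the singular factor $t^{-1/4}$ supplied by Proposition \ref{PRO:HSG}; the delicate point is to keep the interplay between this singularity and the integrability threshold of the factorization exponent $\al$ compatible, which is exactly what the estimate \eqref{EQN:E!1} was engineered to guarantee.
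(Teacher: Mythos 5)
Your argument is correct and is in substance identical to the paper's: the paper simply invokes Da Prato--Zabczyk \cite[Th.~5.3.1]{DapE} together with Proposition \ref{PRO:HSG}, and that cited theorem is proved exactly by the fixed-point-plus-factorization scheme you describe, with the $t^{-1/4}$ Hilbert--Schmidt singularity of \eqref{EQN:E!1}--\eqref{EQN:E!2} playing precisely the role you assign to it. The only point to keep in mind is that the factorization step requires $p$-th moment bounds for some $p>1/\al>4$ (via Burkholder rather than the It\^o isometry alone), which is harmless here since $G$ is bounded by Assumptions \ref{ASS:3}.
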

\begin{proof}
The result follows by \cite[Th. 5.3.1]{DapE}, see also \cite{DPZ}, together with proposition \ref{PRO:HSG}.
\end{proof}

\subsection{Existence and uniqueness for the non-linear equation}\label{SEC:Lip}

The present subsection is devoted to the generalisation of the existence and uniqueness of a mild solution, see Th. \ref{TH:EUMildSol}, to the abstract formulation, see eq. \eqref{EQN:Mild}, of the problem stated by eq. \eqref{EQN:SysPerW}. In particular we shall consider the addition of a  non-linear Lipschitz perturbation. 
The notation used in what follows is as in previous sections. 

We will thus focus on the following non-linear stochastic dynamic boundary value problem

{\footnotesize
\begin{equation}\label{EQN:SysPerWNL}
\begin{cases}
\dot{u}_j (t,x) =  \left (c_{j} u'_j\right )' (t,x)+f_j(t,x,u_j(t,x))+ g_j(t,x,u_j(t,x)) \dot{W}_j^1(t,x)  \, , \\
\qquad \qquad \qquad \qquad \qquad \qquad \qquad \qquad \qquad \qquad t \geq 0\, ,\, x \in (0,1)\, , \, j = 1,\dots,m\, ,\\[1.5ex]
u_j(t,\mathrm{v}_\alpha) = u_l(t,\mathrm{v}_\alpha) =: d^\alpha(t)\, , \quad t \geq 0\, ,\, l,\, j \in \Gamma(\mathrm{v}_i)\, , \, j = 1,\dots,m\, ,\\[1.5ex]
\dot{d}^\alpha (t) = - \sum_{j=1}^m \phi_{j \alpha} u'_j(t,\mathrm{v}_\alpha) + b_{\alpha} d^\alpha(t) + \int_{-r}^0 d^\alpha(t+\theta) \mu(d \theta)+ \tilde{g}_\alpha(t,d^\alpha(t),d^\alpha_t) \dot{W}^2_\alpha (t,\mathrm{v}_{\alpha})\, ,\\
\qquad \qquad \qquad \qquad \qquad \qquad \qquad \qquad \qquad \qquad \qquad \qquad \qquad \qquad  t \geq 0 \, ,\, \alpha=1,\dots,n\, ,\\[1.5ex]
u_j(0,x) = u^0_j(x) \, ,\quad x \in (0,1)\, ,\, j=1,\dots,m\, ,\\[1.5ex]
d^\alpha(0) = d_\alpha^0\, ,\quad \alpha=1,\dots,n\, ,\\[1.5ex]
d^\alpha(\theta) = \eta_\alpha^0(\theta)\, ,\quad \theta \in [-r,0] \, ,\, \alpha=1,\dots,n\, .\\
\end{cases}
\end{equation}
}

In what follows, besides assumptions \ref{ASS:1} and \ref{ASS:3} and  in order to deal with functions $f_j$ appearing in eq. \eqref{EQN:SysPerWNL}, we also require the following
\begin{Assumptions}\label{ASS:4}
The functions 
\[
f_j :[0,T] \times [0,1] \times \RR \to \RR \, ,\quad j=1, \dots, m\, ,
\]
are measurable mappings, bounded and uniformly Lipschitz continuous with respect to the third component, namely, for $j=1, \dots, m$, there exist positive constants $C_j$ and $K_j$, such that, for any $(t,x,y_1) \in [0,T] \times [0,1] \times \RR $ and any $(t,x,y_2) \in [0,T] \times [0,1] \times \RR $, it holds
\[
|f_j(t,x,y_1)| \leq C_j \, , \quad |f_j(t,x,y_1)-f_j(t,x,y_2)| \leq K_j |y_1 -y_2|\: .
\]
\end{Assumptions}
Proceeding similarly to what is seen in Sec. \ref{SEC:AbC}, we reformulate equation \eqref{EQN:SysPerWNL} as an abstract Cauchy problem as follows
\begin{equation}\label{EQN:MildLip}
\begin{cases}
d \XXX(t) = \left [\mathcal{A}\XXX(t) + F(t,\XXX)\right ] dt + G(t,\XXX(t)) dW(t) \, ,\quad t \geq 0\, ,\\
\XXX(0) = \XXX_0 \in \EP\, ,
\end{cases}
\end{equation}
where 
$ F:[0,T] \times \EP \to \EP\,$, and such that
\begin{equation}\label{EQN:F}
F(t,\XXX) = \left (f(t,u),0,0\right )^T \, ,\, \text{ being }\XXX =(u,y,\eta)\, \in \EP  , \, 
\end{equation}
with
\[
\left (f(t,u)\right )(x) = \left (f_1(t,x,u_1(t,x)),\dots,f_m(t,x,u_m(t,x)) \right )^T\, .
\]
The following result provides the existence and uniqueness of a \textit{mild solution} to equation \eqref{EQN:MildLip}.

\begin{Theorem}\label{THM:E!Lip}
Let  assumptions  \ref{ASS:1}, \ref{ASS:3} and \ref{ASS:4}, hold true. Then, there exists a unique mild solution, in the sense of the Definition \ref{DEF:Mild}, to equation \eqref{EQN:MildLip}.
\end{Theorem}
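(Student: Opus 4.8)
The plan is to reduce Theorem \ref{THM:E!Lip} to the already-established existence and uniqueness result of Theorem \ref{TH:EUMildSol} by treating the nonlinear drift term $F$ as a Lipschitz perturbation and applying a fixed-point argument. The mild solution to equation \eqref{EQN:MildLip} should satisfy the integral equation
\[
\XXX(t) = \mathcal{T}(t)\XXX_0 + \int_0^t \mathcal{T}(t-s) F(s,\XXX(s))\, ds + \int_0^t \mathcal{T}(t-s) G(s,\XXX(s))\, dW(s)\, ,
\]
so the work is to show that the map sending $\XXX$ to the right-hand side is a contraction on the Banach space $L^2\left(\Omega, C([0,T];\EP)\right)$, or on a suitable subinterval $[0,T_1]$ from which one bootstraps to all of $[0,T]$.

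\textbf{First I would} verify that $F$ inherits the good properties from Assumptions \ref{ASS:4}. Since each $f_j$ is bounded and uniformly Lipschitz in its third argument, the Nemytskii-type operator $\left(f(t,u)\right)(x) = \left(f_1(t,x,u_1(t,x)),\dots,f_m(t,x,u_m(t,x))\right)^T$ maps $X^2$ into $X^2$ and, by the definition \eqref{EQN:F}, $F:[0,T]\times \EP \to \EP$ satisfies a linear-growth bound $|F(t,\XXX)|_{\EP} \leq M(1+|\XXX|_{\EP})$ and a global Lipschitz bound $|F(t,\XXX)-F(t,\YYY)|_{\EP} \leq K|\XXX-\YYY|_{\EP}$, both uniformly in $t$. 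The key observation making the deterministic convolution harmless is that $F$ takes values in $\EP$ and, because $\left(\mathcal{T}(t)\right)_{t\geq 0}$ is a strongly continuous (hence uniformly bounded on $[0,T]$) semigroup by Theorem \ref{THM:GenAGen}, the term $\int_0^t \mathcal{T}(t-s)F(s,\XXX(s))\, ds$ requires no smoothing estimate of the type \eqref{EQN:E!1}--\eqref{EQN:E!2} that was needed for the stochastic integral.

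\textbf{Then I would} estimate the three terms of the fixed-point map separately. For the stochastic convolution I would reuse Proposition \ref{PRO:HSG} verbatim: the bounds \eqref{EQN:E!1}--\eqref{EQN:E!2} together with the Itô isometry give, after the substitution $\int_0^t (t-s)^{-1/2}\,ds < \infty$, a contribution controlled by $\left(\int_0^t (t-s)^{-1/2}\,ds\right)\sup_{s}\media|\XXX(s)-\YYY(s)|_{\EP}^2$. For the deterministic drift convolution I would use the uniform bound on $\|\mathcal{T}(t)\|_{\mathcal{L}(\EP)}$ over $[0,T]$ together with the Lipschitz property of $F$ and the Cauchy–Schwarz inequality in $s$. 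Adding the two estimates and taking supremum over $t\in[0,T_1]$ produces a bound of the form $C(T_1)\,\|\XXX-\YYY\|_{L^2(\Omega,C([0,T_1];\EP))}^2$ with $C(T_1)\to 0$ as $T_1\to 0$, so the map is a contraction for $T_1$ small. The mean-square continuity of the iterates follows from the strong continuity of $\mathcal{T}$ and a standard dominated-convergence argument on the two convolutions.

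\textbf{The main obstacle} I expect is purely bookkeeping rather than conceptual: one must confirm that adding the drift term does not spoil the delicate Hilbert–Schmidt estimate of Proposition \ref{PRO:HSG}, which is the crux of the earlier analysis and is already proved independent of any drift. Since $F$ enters only through a Bochner integral in $\EP$ (not through the stochastic integral, and not composed with the semigroup in a way that requires the $t^{-1/4}$ smoothing), the nonlinear term decouples cleanly from the singular estimate, and the contraction constant remains finite. Finally, I would extend the local solution on $[0,T_1]$ to the whole interval $[0,T]$ by the standard concatenation argument, partitioning $[0,T]$ into finitely many subintervals of length at most $T_1$ and iterating the fixed-point construction, thereby obtaining the unique global mild solution claimed.
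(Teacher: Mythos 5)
Your proposal is correct and follows essentially the same route as the paper: the paper's proof likewise reduces the theorem to checking that $F$ is globally Lipschitz on $\EP$ (via Assumptions \ref{ASS:4}) and then invokes the standard existence theorem for semilinear SPDEs together with Proposition \ref{PRO:HSG}, citing \cite[Th. 5.3.1]{DapE} rather than writing out the contraction argument. You have simply inlined the fixed-point proof that the cited theorem packages, which is a legitimate and equivalent presentation.
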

\begin{proof}
It is enough to show that the map $F$ defined in equation \eqref{EQN:F} is Lipschitz continuous on the Hilbert space $\EP$. In fact, assumptions \ref{ASS:4} imply that  
\begin{equation}\label{EQN:LipF}
|F(t,\XXX) - F(t,\YYY)|_{\EP} = |f(t,u) -f(t,v)|_{X^2} \leq K |u-v|_{X^2} \leq | \XXX - \YYY |_{\EP}\, ,
\end{equation}
for any $\XXX = (u,y,\eta)^T$ and any $\YYY = (v,z,\zeta)^T \in \EP$. 
Then, exploiting  equation \eqref{EQN:LipF}, together with Proposition \ref{PRO:HSG}, the existence of a unique mild solution is a direct application of \cite[Th. 5.3.1]{DapE}, see also \cite{DPZ}.
\end{proof}

\section{Application to stochastic optimal control}\label{SEC:Opt}

The present section is mainly devoted  to the study and characterization of  the stochastic optimal  
control associated to a general non-linear system of the form

\begin{equation}\label{EQN:MildCon}
\begin{cases}
d \XXX^z(t) &= \left [\mathcal{A}\XXX^z(t) + F(t,\XXX^z)+ G(t,\XXX^z(t))R(t,\XXX^z(t),z(t))  \right ] dt +\\
&+ G(t,\XXX^z(t)) dW(t)\, ,\\
\XXX^z(t_0) &= \XXX_{0} \in \EP\, ,
\end{cases}
\end{equation}

where, besides having used the notations defined along previous sections, we denote by $z$ the control, 
while we use  the notation $\XXX^z$, to indicate the explicit dependence of the process $\XXX \in \EP$, from the control $z$. 
In what follows we exploit the results contained in \cite{FT2}, where a general characterization of stochastic optimal control problem in infinite dimension is given by means of a forward-backward-SDE approach. Therefore, the control problem  defined by equation \eqref{EQN:MildCon}, is to be understood in the weak sense, see also, e.g., \cite{DPZ, Fle}.

As stated in \cite{FT2}, we first fix $t_0 \geq 0$ and $\XXX_0 \in \EP$, then an \textit{Admissible Control System} (ACS) is given by $\mathbb{U} = \left (\Omega,\mathcal{F},\left (\mathcal{F}_t\right )_{t \geq 0},\mathbb{P},\left (W(t)\right )_{t \geq 0},z\right )$, where
\begin{itemize}
\item $\left (\Omega,\mathcal{F},\left (\mathcal{F}_t\right )_{t \geq 0},\mathbb{P}\right )$ is a complete probability space, where the filtration $\left (\mathcal{F}_t\right )_{t \geq 0}$ satisfies the usual assumptions;\\
\item $\left (W(t)\right )_{t \geq 0}$ is a $\mathcal{F}_t -$adapted Wiener process taking values in $\EP$;\\
\item $z$ is a process taking values in the space $Z$, predictable with respect to the filtration $\left (\mathcal{F}_t\right )_{t \geq 0}$, and such that $z(t) \in \mathcal{Z}$ $\mathbb{P}-$a.s., for almost any $t \in [t_0,T]$, being $\mathcal{Z}$ a suitable domain of $Z$.
\end{itemize}

To each ACS, we  associate the mild solution $\XXX^z \in C([t_0,T];L^2(\Omega;\EP))$ to the abstract equation \eqref{EQN:MildCon}. Consequently, we can introduce the functional cost
\begin{equation}\label{EQN:FuncCost}
J(t_0,\XXX_0,\mathbb{U}) = \mathbb{E} \int_{t_0}^T l\left (t,\XXX^z(t),z(t)\right ) dt + \mathbb{E}\varphi(\XXX^z(T))\, ,
\end{equation}
where the function $l$, resp. $\varphi$, denotes the \textit{running cost}, resp. the \textit{terminal cost}. 
Our goal is to minimize the functional $J$ over all admissible control system. If a minimizing ACS for the functional $J$ exits, then it is called optimal control.

Throughout this section we will make use of the assumptions
 \ref{ASS:1}, \ref{ASS:3}, and \ref{ASS:4}, moreover we will also assume the following
\begin{Assumptions}\label{ASS:6}
\begin{description}
\item[(i)] the map $R:[0,T] \times \EP \times \mathcal{Z} \to \EP$ is measurable and it satisfies
\[
\begin{split}
&|R(t,\XXX,z) -R(t,\XXX,z)|_{\EP} \leq C_R(1+|\XXX|_{\EP}+|\YYY|_{\EP})^m |\XXX - \YYY |_{\EP}\, ,\\
&|R(t,\XXX,z)|_{\EP} \leq C_R\, ;
\end{split}
\]
 for some $C_R>0$ and $m \geq 0$;
\item[(ii)] the map $l:[0,T] \times \EP \times \mathcal{Z} \to \RR \cup \{+\infty\}$ is measurable and it satisfies
\[
\begin{split}
&|l(t,\XXX,z) -l(t,\XXX,z)| \leq C_l(1+|\XXX|_{\EP}+|\YYY|_{\EP})^m |\XXX - \YYY |_{\EP}\, ,\\
&|l(t,0,z)|_{\EP} \geq -C\, ,\\
& \inf_{z \in \mathcal{Z}} l(t,0,z) \leq C_l\, ;
\end{split}
\]
for some $C>0$, $C_l \geq 0$ and $m\geq 0$;
\item[(iii)] the map $\varphi: \EP \to \RR$ satisfies
\[
|\varphi(\XXX) - \varphi(\YYY)| \leq C_\varphi(1+|\XXX|_{\EP}+|\YYY|_{\EP})^m |\XXX - \YYY |_{\EP}\, .
\]
for some $C_\varphi>0$ and $m \geq 0$.
\end{description}
\end{Assumptions} 

Under assumptions \ref{ASS:1}, \ref{ASS:3}, \ref{ASS:4}, and \ref{ASS:6},  we can construct, see \cite{FT2},  an ACS as follows.

Exploiting the fact that $R$ is bounded we can therefore apply \textit{Girsanov theorem}, so that we have,  $\forall \zeta \in \mathcal{Z}$, there exists a probability measure $\mathbb{P}^\zeta$, such that
\[
W^\zeta (t) := W(t) - \int_{t_0 \wedge t}^{t \wedge T} R(s,\XXX(s),\zeta) ds\, ,
\]
is a Wiener process. Then, we may rewrite equation \eqref{EQN:MildCon} in terms of the new Wiener process $W^\zeta (t)$ and we consider the uncontrolled equation
\begin{equation}\label{EQN:MildLipCon}
\begin{cases}
d \XXX(t) = \left [\mathcal{A}\XXX(t) + F(t,\XXX)\right ] dt + G(t,\XXX(t)) dW^\zeta(t) \, ,\quad t \geq 0\, ,\\
\XXX(0) = \XXX_0 \in \EP\: ;
\end{cases}
\end{equation}
from Theorem \ref{THM:E!Lip}, we have that there exists a unique mild solution to equation \eqref{EQN:MildLipCon}.

Consequently,   $\forall t \in [0,T]$, and  $\forall (\XXX,\YYY) \in \EP \times \EP$, we  define  the Hamiltonian function related to the aforementioned problem, as follows
\begin{equation}\label{HamProblem}
\begin{split}
\psi(t,\XXX,\YYY) &:= - \inf_{z \in \mathcal{Z}} \{l(t,\XXX,z) + \YYY R(t,\XXX,z)\}\, ,\\
\Gamma(t,\XXX,\YYY) &:= \{z \in \mathcal{Z} \, : \, \psi(t,\XXX,\YYY) + l(t,\XXX,z)+\vv R(t,\XXX,z)=0\}\, ,
\end{split}
\end{equation}
where we would underline that the set $\Gamma(t,\XXX,w)$ is a (possibly empty) subset of $\mathcal{Z}$, while the function $\psi$  satisfies assumptions \ref{ASS:6}.

Within the present setting, we can apply \cite[Th. 5.1]{FT2} to write the \textit{Hamilton-Jacobi-Bellman} (HJB) equation associated to the problem stated by \eqref{EQN:MildCon} together with \eqref{EQN:FuncCost}. In particular, we have
\begin{equation}\label{EQN:HJB}
\begin{cases}
\frac{\partial w(t,\XXX)}{\partial t} + \mathcal{L}_t w(t,\XXX) = \psi(t,\XXX, \nabla w(t,\XXX) G(t,\XXX))\, ,\\
w(T,\XXX) = \varphi(\XXX)\, ,
\end{cases}
\end{equation}
where
\[
\mathcal{L}_t w (\XXX) := \frac{1}{2} Tr\left [G(t,\XXX) G(t,\XXX)^* \nabla^2 w(\XXX) \right ] + \left \langle \mathcal{A}\XXX,\nabla w(\XXX) \right \rangle_{\EP}\, ,
\]
is the infinitesimal generator of the equation \eqref{EQN:MildCon}, while $Tr$ stands for the {\it trace}, and $G^*$ is the adjoint of $G$.

In what follows we exploit the following definition, see, e.g.,  \cite[Def. 5.1]{FT2}.
\begin{Definition}\label{DEF:Mild}
A function $u : [0,T] \times \XX \to \RR$ is defined to be a mild solution in the sense of generalized gradient, to equation \eqref{EQN:HJB} if the following hold:
\begin{description}
\item[(i)] there exists $C>0$ and $m \geq 0$ such that for any $t \in [0,T]$ and any $\uu$, $\vv \in \XX$ it holds
\[
\begin{split}
&|w(t,\XXX)-w(t,\YYY)| \leq C (1+ |\XXX|_{\EP}+|\YYY|_{\EP})^m|\XXX-\YYY|_{\EP}\, ,\\
&|w(t,0)| \leq C\, ;\\
\end{split}
\]
\item[(ii)] for any $0 \leq t \leq T$ and $\XXX \in \EP$, we have that
\[
w(t,\XXX) = P_{t,T} \varphi(\XXX) - \int_t^T P_{t,s} \psi(s,\cdot,w(s,\cdot),\rho(s,\cdot))(\XXX) ds\, ,
\]
where $\rho$ is an arbitrary element of the \textit{generalized directional gradient} $\nabla^G w$, as it has been  defined in \cite{FT2}, while $P_{t,T}$ is the Markov semigroup generated by the forward process \eqref{EQN:MildCon}.
\end{description}
\end{Definition}

\begin{Remark}\label{RemarkGradient}
We would like to underline that, following the approach developed in \cite{FT2}, we do not need to require any differentiability properties for the function $F$, $G$ and $w$. In fact, the notion of {\it gradient} appearing in equation \eqref{EQN:HJB}, is to be understood in a weak sense, namely in terms of the \textit{generalized directional gradient}. In fact, in \cite{FT2} the authors show that, if $w$ is regular enough, then $\nabla w$ coincides with the standard notion of gradient. The latter implies that, in the present case, the \textit{generalized directional gradient}   coincides with the \textit{Fr\'{e}chet derivative}, resp. with the \textit{G\^{a}teaux derivative}, if we assume $w$ to be \textit{Fr\'{e}chet} differentiable, resp. to be \textit{G\^{a}teaux} differentiable.
\end{Remark}

In the light of Definition \ref{DEF:Mild} and Remark \ref{RemarkGradient}, we have the following.

\begin{Proposition}\label{PRO:FT2}
Let us consider the optimal control problem defined by \eqref{EQN:MildCon} and \eqref{EQN:FuncCost}, then the equation \eqref{EQN:HJB} provides the associated HJB problem. Moreover, if  assumptions \ref{ASS:1}, \ref{ASS:3}, \ref{ASS:4}, and \ref{ASS:6} hold true, then we have that the HJB equation \eqref{EQN:HJB} admits a unique mild solution, in the sense of the definition \ref{DEF:Mild}.
\end{Proposition}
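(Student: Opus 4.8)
The plan is to reduce the statement to the abstract theory developed in \cite{FT2} and then to invoke their main existence-and-uniqueness result \cite[Th.~5.1]{FT2}. Concerning the first claim, that \eqref{EQN:HJB} is indeed the HJB equation associated with the control problem \eqref{EQN:MildCon}--\eqref{EQN:FuncCost}, I would argue by dynamic programming: the value function $V(t_0,\XXX_0):=\inf_{\mathbb{U}} J(t_0,\XXX_0,\mathbb{U})$ is expected to solve, at least formally, the backward parabolic equation whose second-order part is the generator $\mathcal{L}_t$ of the uncontrolled diffusion \eqref{EQN:MildLipCon}, while the minimisation over the control variable $z$ produces exactly the Hamiltonian $\psi$ of \eqref{HamProblem}. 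Since the control enters \eqref{EQN:MildCon} only through the operator $G$, via the term $G R$, the infimum defining $\psi$ depends on $\nabla w$ only through $\nabla w\, G$, which is precisely the form appearing on the right-hand side of \eqref{EQN:HJB}; the terminal datum $w(T,\cdot)=\varphi$ matches the terminal cost in \eqref{EQN:FuncCost}.

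For existence and uniqueness of a mild solution I would verify, one by one, the structural hypotheses of \cite{FT2}. First, the drift operator $\mathcal{A}$ generates a strongly continuous semigroup on $\EP$ by Theorem \ref{THM:GenAGen}; the nonlinear drift $F$ is globally Lipschitz on $\EP$ by \eqref{EQN:LipF}; and the diffusion coefficient $G$ of \eqref{EQN:G} satisfies the crucial smoothing estimate \eqref{EQN:E!1}, namely that $\TT G(s,\XXX)$ is Hilbert--Schmidt with an integrable singularity $t^{-1/4}$ at the origin, as established in Proposition \ref{PRO:HSG}. Together with Theorem \ref{THM:E!Lip}, and after the Girsanov change of measure turning $W^\zeta$ into a Wiener process, this guarantees that the forward equation \eqref{EQN:MildLipCon} admits a unique mild solution and that the associated transition semigroup $P_{t,s}$ is well defined, so that the representation formula in Definition \ref{DEF:Mild}(ii) is meaningful.

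It remains to check the hypotheses on the data of the cost functional. The boundedness of $R$ in assumption \ref{ASS:6}(i) is what legitimises the Girsanov transformation used to construct the admissible control system; the polynomial-growth and local-Lipschitz conditions on $R$, $l$ and $\varphi$ in \ref{ASS:6} transfer, through the explicit formula \eqref{HamProblem}, to the Hamiltonian $\psi$, which therefore inherits the growth and local-Lipschitz bounds demanded by \cite{FT2}. With the forward equation, the semigroup regularisation of $G$, and the Hamiltonian all in place, \cite[Th.~5.1]{FT2} applies and yields a unique mild solution $w$ of \eqref{EQN:HJB} in the generalised-gradient sense of Definition \ref{DEF:Mild}.

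I expect the genuine difficulty to lie not in any single computation but in matching the present notation and hypotheses to the precise regularity framework of \cite{FT2}: the key point is that the abstract theory there does not require differentiability of $F$, $G$ or $w$, but only the Hilbert--Schmidt smoothing property \eqref{EQN:E!1} of $\TT G$ together with the \emph{structure condition} that the control acts on the state through the same operator $G$ as the noise. Verifying that \eqref{EQN:E!1}, proved in Proposition \ref{PRO:HSG} despite $\mathcal{A}$ failing to be analytic, owing to the first-order delay block $A_\theta$, is strong enough to furnish the generalised directional gradient $\nabla^G w$ used in Definition \ref{DEF:Mild} is the main obstacle; once this regularisation is secured, the backward-SDE representation and the contraction argument of \cite{FT2} close the problem routinely.
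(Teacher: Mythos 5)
Your proposal is correct and follows essentially the same route as the paper, which simply invokes \cite[Th.~5.1]{FT2}; you merely spell out the verification of its hypotheses (generation of the semigroup by $\mathcal{A}$, Lipschitz continuity of $F$, the Hilbert--Schmidt smoothing estimate of Proposition \ref{PRO:HSG}, and the transfer of assumptions \ref{ASS:6} to the Hamiltonian $\psi$), which the paper leaves implicit. No discrepancy to report.
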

\begin{proof}
The proof immediately follows exploiting  \cite[Th. 5.1]{FT2}.
\end{proof}

As a direct consequence of Proposition \ref{PRO:FT2}, we provide a {\it synthesis} of the optimal control problem, by the following 

\begin{Theorem}\label{THM:Synth}
Let  assumptions \ref{ASS:1}, \ref{ASS:3}, \ref{ASS:4}, and \ref{ASS:6} hold true. Let $w$ be a mild solution to the HJB equation \eqref{EQN:HJB}, and chose $\rho$ to be an element of the generalized directional gradient $\nabla^G w$. Then, for all ACS, we have that $J(t_0,\XXX_0,\mathbb{U}) \geq w(t_0,\XXX_0)$, and the equality holds if and only if the following feedback law is satisfied by $z$ and $\uu^z$
\begin{equation}\label{EQN:ACSOpt}
z(t) = \Gamma \left (t,\XXX^z(t),G(t,\rho(t,\XXX^z(t))\right )\, ,\quad \mathbb{P}-\,a.s. \, \mbox{ for a.a. } \, t \in [t_0,T]\, .
\end{equation}
Moreover, if there exists a measurable function $\gamma:[0,T] \times \EP \times \EP \to \mathcal{Z}$ with
\[
\gamma(t,\XXX,\YYY) \in \Gamma(t,\XXX,\YYY)\, ,\quad t \in [0,T] \, ,\, \XXX\, , \, \YYY \in \XX\, ,
\]
then there also exists, at least one ACS such that
\[
\bar z(t) = \gamma(t,\XXX^z(t),\rho(t,\XXX^z(t)))\, ,\quad \mathbb{P}-\,a.s. \, \mbox{ for a.a. } \, t \in [t_0,T]\, ,
\]
where $\XXX^{\bar z}$ is a mild solution to equation 
\begin{equation}\label{EQN:MildConLoop}
\begin{cases}
d \XXX^z(t) &= \left [\mathcal{A}\XXX^z(t) + F(t,\XXX^z)\right ]dt +\\
&+\left [G(t,\XXX^z(t))R(t,\XXX^z(t),\gamma(t,\XXX^z(t),\rho(t,\XXX^z(t))))  \right ] dt +\\
&+ G(t,\XXX^z(t)) dW(t)\, ,\\
\XXX^z(t_0) &= \XXX_{0} \in \EP\, ,
\end{cases}
\end{equation}
\end{Theorem}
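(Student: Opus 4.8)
The statement is a verification (\emph{synthesis}) theorem, and the plan is to reduce it, exactly as in \cite{FT2}, to a single \emph{fundamental relation} that rewrites the cost functional $J$ in terms of the mild solution $w$ of the HJB equation \eqref{EQN:HJB}. Once such a relation is available, both the inequality $J(t_0,\XXX_0,\mathbb{U}) \geq w(t_0,\XXX_0)$ and the characterisation of equality will drop out of the variational definition of the Hamiltonian $\psi$ in \eqref{HamProblem}. The whole argument rests on Proposition \ref{PRO:FT2}, which guarantees that a mild solution $w$ exists and admits the probabilistic representation of Definition \ref{DEF:Mild}(ii), together with a choice of element $\rho \in \nabla^G w$ of the generalized directional gradient.

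First I would fix an arbitrary ACS $\mathbb{U} = (\Omega,\mathcal{F},(\mathcal{F}_t)_{t\ge 0},\mathbb{P},(W(t))_{t\ge 0},z)$ and let $\XXX^z$ be the associated mild solution of \eqref{EQN:MildCon}. Exploiting the boundedness of $R$ from assumption \ref{ASS:6}(i), Girsanov's theorem produces an equivalent measure under which $W^\zeta(t) = W(t) - \int_{t_0\wedge t}^{t\wedge T} R(s,\XXX^z(s),z(s))\,ds$ is a Wiener process and under which $\XXX^z$ solves the \emph{uncontrolled} equation \eqref{EQN:MildLipCon}, which is well posed by Theorem \ref{THM:E!Lip}. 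Evaluating the mild solution of the HJB equation along this forward process and invoking the representation of Definition \ref{DEF:Mild}(ii), the pair $Y(t) := w(t,\XXX^z(t))$, $Z(t) := \rho(t,\XXX^z(t))\,G(t,\XXX^z(t))$ solves the backward equation with terminal datum $\varphi(\XXX^z(T))$ and generator $-\psi$. Rewriting the backward stochastic integral with respect to the original noise $W$ via $dW^\zeta = dW - R\,ds$ and taking expectations — the stochastic integral being a genuine martingale thanks to the polynomial bounds of assumption \ref{ASS:6} and the moment estimates on $\XXX^z$ inherited from Theorem \ref{THM:E!Lip} — yields the fundamental relation
\[
J(t_0,\XXX_0,\mathbb{U}) = w(t_0,\XXX_0) + \mathbb{E}\int_{t_0}^T \Big[\, \psi(t,\XXX^z,\rho G) + l(t,\XXX^z,z) + \rho G\,R(t,\XXX^z,z)\,\Big]\,dt .
\]

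The inequality then follows at once: by the very definition of $\psi$ in \eqref{HamProblem} as an infimum over $z \in \mathcal{Z}$, the bracketed integrand is nonnegative, whence $J(t_0,\XXX_0,\mathbb{U}) \geq w(t_0,\XXX_0)$. Moreover the integrand vanishes $\mathbb{P}\otimes dt$-a.e. if and only if $z(t)$ realises the infimum, i.e. $z(t) \in \Gamma(t,\XXX^z(t),G(t,\XXX^z(t))\rho(t,\XXX^z(t)))$, which is precisely the feedback law \eqref{EQN:ACSOpt}; this gives the claimed equivalence. For the final assertion I would, given a measurable selector $\gamma(t,\XXX,\YYY)\in\Gamma(t,\XXX,\YYY)$, insert $\bar z(t)=\gamma(t,\XXX^{\bar z}(t),\rho(t,\XXX^{\bar z}(t)))$ into the drift and solve the resulting closed-loop equation \eqref{EQN:MildConLoop}: a mild solution exists by Theorem \ref{THM:E!Lip} combined with a Girsanov change of measure absorbing the additional bounded drift, and by construction this control saturates the feedback condition, so equality holds and $\bar z$ is optimal.

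The step I expect to be the main obstacle is the rigorous derivation of the fundamental relation in this infinite-dimensional, \emph{non-smooth} setting: since the mild solution $w$ need not be Fr\'{e}chet differentiable (cf. Remark \ref{RemarkGradient}), one cannot apply a classical It\^{o} formula to $w(t,\XXX^z(t))$ and must instead rely on the generalized directional gradient machinery of \cite{FT2} to justify the backward representation with $Z=\rho G$. Controlling the interplay between the Girsanov change of measure, the mere strong continuity (rather than analyticity) of the semigroup $\mathcal{T}(t)$ coming from the delay component $A_\theta$, and the polynomial growth of $l$, $\varphi$ and $\psi$ — so that all expectations and the martingale property are legitimate — is the technical heart of the argument; once these are in place, the verification step itself is short.
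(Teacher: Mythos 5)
Your proposal is correct and follows essentially the same route as the paper, whose entire proof is the citation ``See \cite[Th. 7.2]{FT2}'': the fundamental relation obtained via Girsanov and the generalized directional gradient, followed by the variational characterisation of equality through $\Gamma$, is precisely the argument of that reference. You have simply unpacked the citation, correctly identifying the non-smoothness of $w$ and the lack of analyticity of the delay semigroup as the points where the generalized-gradient machinery of \cite{FT2} is doing the real work.
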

\begin{proof}
See \cite[Th. 7.2]{FT2}.
\end{proof}

\begin{Example}[The heat equation with controlled stochastic boundary conditions on a graph]

In what follows we model the heat equation over a finite graph  $\mathbb{G}$, considering local controlled dynamic boundary conditions,  namely, see \ref{SEC:Intro}, we have a total of $m$ nodes, and $n_0 =n$ nodes equipped with dynamic boundary conditions. We also assume that there is not a noise affecting  the heat equation, whereas we assume the boundary condition to be perturbed by an additive Wiener process. 
Summing up, by means of the notations introduced along previous sections, we deal with the following  system

{\footnotesize
\begin{equation}\label{EQN:HeatEq}
\begin{cases}
\dot{u}_j (t,x) = \left (c_j u'_i\right )' (t,x) \, , \quad t \geq 0\, ,\, x \in (0,1)\, , \, j = 1,\dots,m\, ,\\[1.5ex]
u_j(t,\mathrm{v}_\alpha) = u_l(t,\mathrm{v}_\alpha) =: d^\alpha(t)\, , \quad t \geq 0\, ,\, l,\, j \in \Gamma(\mathrm{v}_i)\, , \, j = 1,\dots,m\, ,\\[1.5ex]
\dot{d}^\alpha (t) = - \sum_{j=1}^m \phi_{\alpha,j} c_j(\mathrm{v}_\alpha) u'_j(t,\mathrm{v}_\alpha) + \frac{1}{T}\int_{-T}^0 d^\alpha(t+\theta)d\theta + \tilde{g}_\alpha(t)\left ( z(t) + \dot{W}^2_\alpha (t)\right ) \, ,\\
\qquad \qquad \qquad \qquad \qquad \qquad \qquad \qquad \qquad \qquad \qquad \qquad \quad t \geq 0 \, ,\, \alpha=1,\dots,n\, ,\\[1.5ex]
u_j(0,x) = u^0_j(x) \, ,\quad x \in (0,1)\, ,\, j=1,\dots,m\, ,\\[1.5ex]
d^\alpha(0) = d_\alpha^0\, ,\quad \alpha=1,\dots,n\, .\\
\end{cases}
\end{equation}
}
Then, we rewrite the system \eqref{EQN:HeatEq}, as an abstract Cauchy problem on the Hilbert space $\XX$, as follows
\begin{equation}\label{EQN:MildConHeat}
\begin{cases}
d \XXX(t)^z = \mathcal{A}\XXX^z(t) dt + G(t,\XXX^z(t))\left (R z(t) + dW(t)\right ) \, ,\quad t \in [t_0,T]\, ,\\
\XXX^z(t_0) = \XXX_{0} \in \EP\, ,
\end{cases}
\end{equation}
where $R: \RR^n \to \EP$ is the immersion of the boundary space $\RR^n$ into the product space $\EP$. In the present setting the control $z$ takes values in $\RR^n$, while  $\mathcal{Z}$ is a subset of $\RR^n$. Considering a cost functional of the form \eqref{EQN:FuncCost}, then Proposition \ref{PRO:FT2} together with  Theorem \ref{THM:Synth}, imply  the existence of, at least, one ACS for the HJB equation \eqref{EQN:HJB} associated with the stochastic control problem \eqref{EQN:MildConHeat}-\eqref{EQN:FuncCost}.
Consequently, the synthesis of the optimal control problem, reads as follows
\end{Example}

\begin{Theorem}\label{THM:Synth}
Let  assumptions \ref{ASS:1}, \ref{ASS:3}, \ref{ASS:4}, and \ref{ASS:6} hold true. 
Let $w$ be a mild solution to the HJB equation \eqref{EQN:HJB}, and choose $\rho$ to be an element of the generalized directional gradient $\nabla^G w$.
Then, for all ACS, we have that $J(t_0,\XXX_0,\mathbb{U}) \geq w(t_0,\XXX_0)$, and the equality holds if and only of the following feedback law is satisfied by $z$ and $\XXX^z$
\begin{equation}\label{EQN:ACSOpt}
z(t) = \Gamma \left (t,\XXX^z(t),G(t,\rho(t,\XXX^z(t))\right )\, ,\quad \mathbb{P}-\,a.s. \, \mbox{ for a.a. } \, t \in [t_0,T]\, .
\end{equation}
Moreover, if there exists a measurable function $\gamma:[0,T] \times \EP \times \EP \to \mathcal{Z}$ with
\[
\gamma(t,\XXX,\YYY) \in \Gamma(t,\XXX,\YYY)\, ,\quad t \in [0,T] \, ,\, \XXX\, , \, \YYY \in \EP\, ,
\]
then there also exists at least one ACS, such that
\[
\bar z(t) = \gamma(t,\XXX^z(t),\rho(t,\XXX^z(t)))\, ,\quad \mathbb{P}-\,a.s. \, \mbox{ for a.a. } \, t \in [t_0,T]\, .
\]
Eventually, we have that $\XXX^{\bar z}$ is a mild solution to equation 
\begin{equation}\label{EQN:MildConHeat}
\begin{cases}
d \XXX(t)^z = \mathcal{A}\XXX^z(t) dt + G(t,\XXX^z(t))\left (R \gamma(t,\XXX^z(t),\rho(t,\XXX^z(t))) + dW(t)\right ) \, ,\quad t \in [t_0,T]\, ,\\
\XXX^z(t_0) = \XXX_{0} \in \EP\, .
\end{cases}
\end{equation}
\end{Theorem}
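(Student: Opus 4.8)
The plan is to recognise that this statement is nothing but the abstract synthesis result (the one established via \cite[Th. 7.2]{FT2}) specialised to the controlled heat problem \eqref{EQN:MildConHeat}. Consequently the only genuine work is to check that this concrete example fits the abstract framework \eqref{EQN:MildCon} and verifies Assumptions \ref{ASS:1}, \ref{ASS:3}, \ref{ASS:4} and \ref{ASS:6}; once this is done, both the inequality $J(t_0,\XXX_0,\mathbb{U}) \geq w(t_0,\XXX_0)$ and the equality-iff-feedback characterisation follow directly, and no further analytical estimate is needed.

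First I would cast \eqref{EQN:MildConHeat} in the form \eqref{EQN:MildCon}. Splitting the diffusion term as $G(t,\XXX^z)\bigl(Rz(t)+dW(t)\bigr)=G(t,\XXX^z)Rz(t)\,dt+G(t,\XXX^z)\,dW(t)$, one identifies the abstract drift with $F\equiv 0$ and the abstract control map with $R(t,\XXX,z)=Rz$, where now $R:\RR^n\to\EP$ denotes the immersion of the boundary space. A feature that simplifies matters is that here $G$ depends only on $t$ through the diagonal coefficients $\tilde g_\alpha(t)$: the heat equation itself carries no noise, so $\sigma_1=0$, and the boundary perturbation is additive.

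Next I would verify the hypotheses. Assumption \ref{ASS:1} holds by the choice of the conductances $c_j$; Assumption \ref{ASS:4} is vacuous since $F\equiv 0$; Assumption \ref{ASS:3} holds because each $\tilde g_\alpha(t)$ is bounded and, being independent of the state, trivially Lipschitz. As $\mathcal{Z}$ is a bounded subset of $\RR^n$, the linear map $Rz$ is bounded and, being constant in $\XXX$, Lipschitz with constant zero, so Assumption \ref{ASS:6}(i) holds; the remaining parts of \ref{ASS:6} are imposed directly on the costs $l$ and $\varphi$. Because $\RR^n$ is finite dimensional, $\sigma_2(t)\in\mathcal{L}_2(\RR^n;Z^2)$ automatically, so the Hilbert--Schmidt regularity of $\mathcal{T}(t)G$ demanded in Proposition \ref{PRO:HSG} holds a fortiori.

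With the hypotheses in force I would invoke Proposition \ref{PRO:FT2} to obtain the unique mild solution $w$ of the HJB equation \eqref{EQN:HJB} in the sense of Definition \ref{DEF:Mild}; the boundedness of $R$ legitimises the Girsanov change of measure used to construct each admissible control system, while Theorem \ref{THM:E!Lip} furnishes, under every $\mathbb{P}^\zeta$, a unique mild solution of the uncontrolled state equation. The fundamental inequality, the characterisation \eqref{EQN:ACSOpt} of optimality through the feedback law, and the existence of at least one optimal ACS built from a measurable selector $\gamma$ of $\Gamma$ then transfer verbatim from \cite[Th. 7.2]{FT2}, with closed-loop dynamics given by \eqref{EQN:MildConHeat}. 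The only point genuinely deserving attention — and the main, if modest, obstacle — is confirming that the degenerate choice $\sigma_1=0$ does not spoil the estimates of Proposition \ref{PRO:HSG}: in the general case analyticity of $T_{\mathfrak{a}}$ was exploited to absorb the singular factor coming from the second-order part, whereas here that mechanism is simply bypassed, since the surviving boundary component $\sigma_2$ takes values in a finite-dimensional space and is therefore automatically Hilbert--Schmidt.
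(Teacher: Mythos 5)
Your proposal is correct and follows essentially the same route as the paper: the paper proves this synthesis result (in both its general form and this heat-equation instance) simply by invoking \cite[Th. 7.2]{FT2}, exactly as you do, after the groundwork of Proposition \ref{PRO:FT2} and the Girsanov-based construction of the ACS. Your additional verification that the concrete system \eqref{EQN:MildConHeat} fits the abstract framework \eqref{EQN:MildCon} (identifying $F\equiv 0$, $R(t,\XXX,z)=Rz$, and noting that $\sigma_1=0$ leaves the Hilbert--Schmidt estimates of Proposition \ref{PRO:HSG} intact because the boundary noise lives in the finite-dimensional space $\RR^n$) is more than the paper records, but entirely consistent with its argument.
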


\newpage


\begin{thebibliography}{}
\addcontentsline{toc}{chapter}{Bibliografy}
\bibliographystyle{plain}
\thispagestyle{empty}

\bibitem{ALbDP} S. Albeverio and L. Di Persio, "Some stochastic dynamical models in neurobiology: recent developments", European Communications in Mathematical and Theoretical Biology, : 44-53, (2011).

\bibitem{Allen} L. JS. Allen, "An introduction to stochastic processes with applications to biology" CRC Press, (2010).

\bibitem{BarbuCorDP} V. Barbu, F. Cordoni and L. Di Persio, "Optimal control of stochastic FitzHugh-Nagumo equation", International Journal of Control, (\textbf{4}), 746–756, (2016).

\bibitem{Bat1} A. Bátkai and S. Piazzera, Semigroups for delay equations. Wellesley: AK Peters, (2005).

\bibitem{Bat2} A. Bátkai and S. Piazzera, "Semigroups and linear partial differential equations with delay." Journal of mathematical analysis and applications 264.1 : 1-20, (2001).

\bibitem{BenazzoliDP} C. Benazzoli and L. Di persio "Default contagion in financial networks." International Journal of Mathematics and Computers in Simulation, Vol. 10, Pages 112-117, (2016)

\bibitem{BCM} S. Bonaccorsi, F. Confortola and E. Mastrogiacomo. "Optimal control of stochastic differential equations with dynamical boundary conditions." Journal of Mathematical Analysis and Applications 344.2: 667-681, (2008).

\bibitem{BZM} S. Bonaccorsi, C. Marinelli and G. Ziglio, "Stochastic FitzHugh-Nagumo equations on networks with impulsive noise." Electron. J. Probab 13: 1362-1379, (2008).

\bibitem{BZ} S. Bonaccorsi and G. Ziglio. "A semigroup approach to stochastic dynamical boundary value problems." Systems, control, modeling and optimization. Springer US,  55-65, 2006.

\bibitem{Car} S. Cardanobile and D. Mugnolo, "Analysis of a FitzHugh-Nagumo-Rall model of a neuronal network.", Mathematical Methods in the Applied Sciences 30.18: 2281-2308, (2007).

\bibitem{Car3} S. Cardanobile and D. Mugnolo, "Qualitative properties of coupled parabolic systems of evolution equations.", Annali della Scuola Normale Superiore di Pisa–Classe di Scienze 7.2 : 287-312, (2008).

\bibitem{Con} F. Confortola and E. Mastrogiacomo, "Optimal control for stochastic heat equation with memory", Evol. Equ. Control Theory 3, (\textbf{1}), 35–58,  (2014).

\bibitem{CDP} F. Cordoni and L. Di Persio, "Gaussian estimates on networks with dynamic stochastic boundary conditions", Infinite Dimensional Analysis and Quantum Probability, Vol. 20 (1), (2017).


\bibitem{CDPBackward1} F. Cordoni and L. Di Persio, "Backward stochastic differential equations approach to hedging, option pricing, and insurance problems", International Journal of Stochastic Analysis,
Vol. 2014, Art. No. 152389, (2014)

\bibitem{CDPBackward2} F. Cordoni and L. Di Persio, "A bsde with delayed generator approach to pricing under counterparty risk and collateralization", International Journal of Stochastic Analysis,
Vol. 2016, Art. No. 1059303, (2016).

\bibitem{Dap} G. Da Prato and J. Zabczyk, "Stochastic equations in infinite dimensions", Vol. 152. Cambridge university press, (2014).

\bibitem{DapE} G. Da Prato and J. Zabczyk. "Ergodicity for infinite dimensional systems", Vol. 229. Cambridge University Press, (1996).

\bibitem{DPBonolloPellegrini} L. Di Persio and M. Bonollo and G. Pellegrini , "Polynomial chaos expansion approach to interest rate models",  Journal of Probability and Statistics, Vol. 2015, Art. No. 369053 (2015)

\bibitem{DPZ} L. Di Persio and G. Ziglio, "Gaussian estimates on networks with applications to optimal control", Networks Het. Media, (\textbf{6}), 279--296, (2011).

\bibitem{Eng1} K.J. Engel, "Spectral theory and generator property for one-sided coupled operator matrices", Semigroup Forum. Vol. 58. No. 2. Springer-Verlag New York., (1999).

\bibitem{Eng} K. J. Engel and R. Nagel, "One-parameter semigroups for linear evolution equations", Vol. 194. Springer Science $\&$ Business Media, (2000).

\bibitem{Fle} W. H. Fleming and H. M. Soner, Controlled Markov processes and viscosity solutions. Vol. 25. Springer Science $\&$ Business Media, (2006).

\bibitem{FT2} M. Fuhrman and G. Tessitore, "Generalized directional gradients, backward stochastic differential equations and mild solutions of semilinear parabolic equations", Applied Mathematics and Optimization 51.3: 279-332, (2005).

\bibitem{GarZoll} C. Gardiner and P. Zoller, "Quantum noise: a handbook of Markovian and non-Markovian quantum stochastic methods with applications to quantum optics", Vol. 56. Springer Science \& Business Media, (2004).

\bibitem{Gol} G. R. Goldstein, "Derivation and physical interpretation of general boundary conditions." Advances in Differential Equations 11.4: 457-480, (2006).

\bibitem{HiSa} A. Hirsa and S. N. Neftci "An introduction to the mathematics of financial derivatives", Academic Press, (2013).

\bibitem{Inf} G. Infante and J. R. L. Webb, "Nonlinear non-local boundary-value problems and perturbed Hammerstein integral equations." Proceedings of the Edinburgh Mathematical Society (Series 2) 49.03: 637-656,  (2006).

\bibitem{Inf1} G. Infante and J. R. L. Webb,. "Loss of positivity in a nonlinear scalar heat equation." Nonlinear Differential Equations and Applications NoDEA 13.2: 249-261, (2006).

\bibitem{Inf2} G. Infante and P. Pietramala, "A third order boundary value problem subject to nonlinear boundary conditions." Mathematica Bohemica 135.2: 113-121,  (2010).

\bibitem{Kai} C. Kaiser, "Integrated semigroups and linear partial differential equations with delay." Journal of mathematical analysis and applications 292.2 (2004): 328-339.

\bibitem{Kleinert} H. Kleinert, "Path integrals in quantum mechanics, statistics, polymer physics, and financial markets", World Scientific (2009).

\bibitem{Mug} D. Mugnolo, "Gaussian estimates for a heat equation on a network." Netw. Heterog. Media 2 (2007), (\textbf{1}), 55–79..

\bibitem{MugB} D. Mugnolo, Semigroup methods for evolution equations on networks. Springer, (2014).

\bibitem{MugR} D. Mugnolo and S. Romanelli, "Dynamic and generalized Wentzell node conditions for network equations." Mathematical methods in the applied sciences 30.6 (2007): 681-706.

\bibitem{Oua} E. M. Ouhabaz, Analysis of Heat Equations on Domains.(LMS-31). Princeton University Press, (2009).

\bibitem{Tum} R. Tumulka, "The analogue of Bohm–Bell processes on a graph." Physics Letters A 348.3: 126-134, (2006).

\bibitem{Web} J. R. L. Webb, "Optimal constants in a nonlocal boundary value problem." Nonlinear Analysis: Theory, Methods $\&$ Applications 63.5: 672-685, (2005).

\end{thebibliography}
\end{document}